\documentclass[11pt,reqno]{amsart}
\usepackage{geometry}\geometry{top=5cm,bottom=2cm,left=3cm,right=3cm}
\usepackage{amssymb}
\usepackage{amsthm}
\usepackage{amsmath}
\usepackage{graphicx}
\usepackage{todonotes}
\usepackage[all]{xy}
\usepackage{enumitem}

\newcounter{pippo}

\relpenalty=10000
\hyphenation{}

\newcommand{\R}{\mathbb R}
\newcommand{\N}{\mathbb N}


\newenvironment{potwr}[1]{{\noindent\it \underline{Proof of Theorem \ref{#1}}}.}{\qed}



\theoremstyle{plain}
\newtheorem{theorem}{Theorem}[section]

\newtheorem{lemma}[theorem]{Lemma}

\theoremstyle{definition}
\newtheorem{definition}[theorem]{Definition}

\theoremstyle{example}
\newtheorem{example}[theorem]{Example}

\numberwithin{equation}{section}


\renewcommand{\Im}{\operatorname{Im}}		
\newcommand{\dist}{\operatorname{dist}}		
\newcommand{\sign}{\operatorname{sign}}	
\newcommand{\sls}[2]{\{#1 \leq #2\} }		
\renewcommand{\sls}[2]{#1^{\leq #2} }		
\renewcommand{\sls}[2]{#1^{(#2)} }		
\newcommand{\spf}{\operatorname{sf}}		
\newcommand{\ps}[2]{$(PS)_{#1,#2}$}		
\newcommand{\partf}{\partial_1 f}	     	
\newcommand{\Zin}{Z_{\operatorname{in}}}
\newcommand{\Zout}{Z_{\operatorname{out}}}
\newcommand{\Sout}{S_{\operatorname{out}}}
\newcommand{\ind}{\operatorname{ind}}		

\newcommand{\J}{\mathcal J}


\usepackage{color} 

\renewcommand{\l}{\lambda}


\linespread{1.2}

\begin{document}

\title[Title]{A global bifurcation theorem for critical values of $C^1$ maps in Banach spaces}

\author[P.\ Amster]{Pablo Amster}
\author[P.\ Benevieri]{Pierluigi Benevieri}
\author[J.\ Haddad]{Julian Haddad}

\date{\today}

\address{\smaller Pablo Amster -
Departamento de Matem\'atica, Facultad de Ciencias Exactas y Naturales
Universidad de Buenos Aires and CONICET. 1428 Buenos Aires, Argentina
{\it E-mail address: \tt pamster@dm.uba.ar }}
\address{\smaller Pierluigi Benevieri - Instituto de Matem\'atica e Estat\'istica,
Universidade de S\~ao Paulo,
Rua do Mat\~ao 1010,
S\~ao Paulo - SP - Brasil - CEP 05508-090 -
 {\it E-mail address: \tt
pluigi@ime.usp.br}}
\address{\smaller Juli\'an Haddad - Departamento de Matem\'atica, Instituto de Ci\^encias Exatas, Universidade Federal de Minas Gerais
 {\it E-mail address: \tt
jhaddad@mat.ufmg.br}}

\begin{abstract}
We present a global bifurcation result for critical values of $C^1$ maps in Banach spaces. The approach is topological based on homotopy equivalence of pairs of topological spaces. For $C^2$ maps, we prove a particular global bifurcation result, based on the notion of spectral flow.
\end{abstract}

\maketitle

\section{Introduction}
\label{introduction}

In this paper we present a global bifurcation result for critical values of a $C^1$ map in Banach spaces. 
We proceed in the general spirit of the family of works that uses topological methods, whose origin can be found in the textbook of Krasnoselskij \cite{Kr} in 1964 and the paper of Rabinowitz \cite{Rab} in 1971, even though, we must emphasize, their results concern bifurcation of {\sl solutions} of particular equations, while ours are related to bifurcation of {\sl critical values}, that is, {\sl target values} of a particular function. 

Krasnoselskij obtains the following local bifurcation theorem, which we recall in a simplified version. Let $X$ be a real Banach space. Consider a map $f:\R\times X\to X$ of the form
\[
f(\l,x)=x-\l C(x),
\]
where $C$ is non-linear, compact, Fr\'echet differentiable at $x=0$ and such that $C(0)=0$. We use the term ``compact'' for a continuous map sending bounded subsets of the domain to relatively compact subsets of the target space. The solutions of the equation
\begin{equation}
\label{krasnoeq}
f(\l,x)=0
\end{equation} 
of the form $(\l,0)$ are called \emph{trivial} and a real number $\l_0$ is called a \emph{bifurcation point} of \eqref{krasnoeq} if every neighborhood of $(\l_0,0)$ in $\R\times X$ contains nontrivial solutions.
It is immediate to notice that a necessary condition for $\l_0$ to be a bifurcation point is that the linear operator $I-\l_0C'(0)$ is not invertible, that is, $\l_0$ is a characteristic value of the Fr\'echet derivative $C'(0)$ of $C$ at zero (which is a compact linear operator).

Krasnoselskij proves that $\l_0$ is a bifurcation point of \eqref{krasnoeq} if it is a characteristic value of $C'(0)$ of odd algebraic multiplicity. Rabinowitz extends this result, proving a so called global bifurcation theorem, i.e., showing that 
there exists a connected set $R$ of nontrivial solutions whose closure contains $(\l_0,0)$ and such that at least one of the two alternatives is verified:
\begin{itemize}
\item [i)] $R$ is unbounded,
\item [ii)] the closure of $R$ meets a point of the form $(\l_1,0)$ with $\l_0\neq\l_1$.
\end{itemize} 

It is obvious why Krasnoselskij's result is usually called {\sl local}, while Rabinowitz's one {\sl global}. The approaches of the two authors are based on the application of the Leray--Schauder degree. It is not possible to explain here such a method in details. We limit ourselves to recall the following idea: take $\l\in \R$. If $I-\l C'(0)$ is an automorphism of $X$, we simply denote by the symbol $\deg_{LS}(I-\l C'(0))$ the Leray--Schauder degree of the triple $(I-\l C'(0), U,0)$, where $U$ is any open bounded subset of $X$ containing the origin. Such a value could be $1$ or $-1$, while the Leray--Schauder degree of any triple $(I-\hat\l C'(0),U,0)$ is not defined when $\hat\l$ is a characteristic value of $C'(0)$. The degree is also locally constant, when defined, with respect to $\l$. It can be proven that, when $\l$ crosses a characteristic value $\hat\l$, $\deg_{LS}(I-\l C'(0))$ changes sign if and only if $\hat\l$ has odd algebraic multiplicity. This sign jump is crucial to obtain bifurcation. If, otherwise, the algebraic multiplicity of $\hat\l$ is even, this point could be (or not) a bifurcation point, but the degree does not help to give an answer.

Now, two interesting facts happen:

\begin{itemize}
\item[a)] if, in the equation \eqref{krasnoeq}, $X$ is a real separable Hilbert space and $C'(0)$ is a symmetric (i.e., self-adjoint) operator, then every characteristic value of $C'(0)$ is a bifurcation point;
\item[b)] in some cases, the bifurcation points that are characteristic values of $C'(0)$ of even algebraic multiplicity do not produce a ``global bifurcation branch'' in the sense of Rabinowitz's Theorem.
\end{itemize}

Some questions have been quite naturally stimulated in the last decades and in recent years by the above facts: if one tackles a more 
general 
problem than \eqref{krasnoeq}, is it possible to find a more general degree theory to detect local or global bifurcation? what about more sofisticated topological methods? why do we observe, in some cases, local and not global bifurcation?

More general topological degree theories have been introduced, extending the Leray--Schau\-der degree to compact and non-compact perturbations 
- also multivalued perturbations -
 of nonlinear Fredholm maps between Banach spaces (see, e.g., \cite{BF1,BF4,BCF,ET1,ET2,FPR1,FPR2,RaSa,ZN}). Consequently, local and global bifurcation results have been obtained for more general problems than \eqref{krasnoeq}. We actually have an enormous literature.
%

Consider for example a Banach space $X$ and a $C^1$ map $f:\R\times X\to X$.
Assume $f(\l,0)=0$ for $\l\in \R$. Suppose that, for any $(\l,x)$, the Fr\'echet derivative $\partial_2f(\l,x)$ of $f$ with respect to the second variable at $(\l,x)$ is a Fredholm operator of index zero. 
With a particular notion of orientation 
for Fredholm maps in (possibly infinite dimensional) Banach spaces, it is possible to define a 
topological degree for any partial map $f(\l,\cdot)$
 (see \cite{BF1,FiPeRa}). 
Given $\l\in \R$, denote by $L_{\l}=\partial_2f(\l,0)$.
Suppose $\l_0$ is such that $L_{\l}$ is an isomorphism for $\vert \l-\l_0\vert$ small and nonzero. If the degree of $L_{\l}$ has a sign jump when $\l$ crosses $\l_0$, then $\l_0$ turns out to be a bifurcation point of $f(\l,x)=0$ with a global bifurcation behavior.
Analogously to the case of compact perturbations of the identity studied by Krasnoselskij and Rabinowitz, also in this case the lack of sign jump of the degree does not say anything about bifurcation. 

In the self-adjoint case, the Morse index is a useful tool to detect local bifurcation (but not global, see the above remark b) in some cases for which the degree does not help. Consider a separable real Hilbert space $H$. It is known that, given a self-adjoint Fredholm operator $T:H\to H$, there exists a unique orthogonal splitting of $H$,
\[
H=V^-(T)\oplus V^+(T)\oplus \ker T,
\]
such that $V^-(T)$ and $V^+(T)$ are $T$-invariant, the quadratic form $x\mapsto \langle Tx,x\rangle$ is negative definite on $V^-(T)$ and positive definite on $V^+(T)$.

%
%
With a slight abuse of notation, we will refer to $V^-(T)$ and $V^+(T)$  as the \emph{negative} and the \emph{positive eigenspaces} of $T$, respectively.  The \emph{Morse index} of $T$, denoted by $\mu(T)$, is defined as the dimension of $V^-(T)$ if it is finite.  The following local bifurcation result can be found in the textbook \cite{MW} by Mawhin and Willem (see also \cite{SW} and \cite{FPR99}).  They consider a compact interval $[a,b]$ and an open neighbourhood $U$ of $[a,b]\times \{0\}$ in $\R\times H$.  Given a $C^{2}$ map $\psi:U\to \R$, denote by $L_{\l}$ the Hessian of $\psi_{\l}:=\psi(\l,\cdot)$ at zero, that is, the second derivative of $\psi$ with respect to second variable at the point $(\l,0)$. 

\smallskip

{\bf Theorem A.} \emph{In the above notation, assume that $0\in H$ is a critical point of the functional $\psi_{\l}$ for every $\l\in [a,b]$. In addition, assume that $L_{\l}:H\to H$ is a Fredholm operator and suppose that the negative eigenspace $V^-(L_{\l})$ is finite dimensional for every $\l\in [a,b]$. If 
\[
\mu(L_{a})\neq\mu(L_{b}),
\]
then the interval $[a,b]$ contains a bifurcation point.}

\smallskip

The above important result does not apply in the important case when the operators $L_{\l}$ are so called ``strongly indefinite'', that is, when their positive and negative eigenspaces have infinite dimension.
In order to extend the above result, in a series of papers by Fitzpatrick, Pejsachowicz, Recht, Waterstraat \cite{FPR99,FPR00,PejWat13} a bifurcation problem for a Hamiltonian system is investigated by the application of the spectral flow. The spectral flow has been introduced by Athiyah, Patodi and Singer in \cite{APS} and it is a topologically invariant integer number associated to continuous path of self-adjoint Fredholm operators, $L_\l$, $\l\in [a,b]$, in a separable real Hilbert space $H$.
The spectral flow can be defined by different equivalent methods. In the next section we will summarize its construction, following the approach of Fitzpatrick, Pejsachowicz and Recht \cite{FPR99}. Here, we limit ourselves to observe that
\[
\spf (L,[a,b])=\mu(L_{b})-\mu(l_{a})
\]
when both sides of the above equality are meaningful. 
In \cite{FPR99} it is proven the following extension of Theorem A.

\smallskip

{\bf Theorem B.}
(Fitzpatrick, Pejsachowicz, Recht)
\emph{Let $H$ be a separable real Hilbert space and let $\psi:\R\times H \to \R$ be a $C^2$ function such that, for each $\l\in \R$, $x=0$ is a critical point of the functional $\psi_{\l}:=\psi(\l,\cdot)$. Assume that the Hessian $L_{\l}$ of $\psi_{\l}$ at $0$ is Fredholm and that $L_{a}$ and $L_{b}$ are nonsingular for suitable $a,b$. If $\spf(L, [a, b])\neq 0$, then every neighborhood of $[a, b]\times \{0\}$ contains points $(\l, x)$ such that $x\neq 0$ and is a critical point of $\psi_{\l}$.
}

\smallskip

As previously recalled, the Leray--Schauder degree has been extended in various directions in recent years. Without entering into details, let us just use here a notion of topological degree for a linear Fredholm operator $L:E\to $F between two real Banach spaces that has been introduced by Mawhin (see \cite{CBMS}) and that can be defined for some type of perturbations of linear Fredholm operators between (not necessarily coinciding) Banach spaces. If $T:E\to F$ is an isomorphism between two Banach spaces $E$ and $F$, the degree with respect to $L$ and any open subset of $E$ containing zero, $\deg (T)$, is $\pm1$, depending on a particular concept of orientation induced in the construction (we cannot enter into details here). Coming back to the setting of Theorem B above, one can prove (see \cite{FPR99}) that
\[
(-1)^{\spf(L, [a, b])}=\deg(L_{a})\cdot \deg(L_{b}).
\]

The above equality explains why the spectral flow is a finer invariant than the degree to detect bifurcation, even if it can be applied in a more restricted context. The spectral flow could be nonzero with a lack of sign jump of the degree. In other words, the spectral flow detects bifurcation, if does not vanish, in some cases when the degree does not.

On the other hand, the spectral flow helps to prove local bifurcation results, as in Theorem B, and it seems unable to provide {\sl global} bifurcation results. The reason is probably due to the fact that the spectral flow is defined for {\sl linear operators}, while the degree works in nonlinear maps (see Example \ref{eversion} below). A nonlinear version of the spectral flow could help to obtain global bifurcation results, but to the best of our knowledge it does not exist, and its construction (if possible) is an interesting and challenging open problem in Functional Analysis.

Motivated by these difficulties, in this paper we face a different problem focusing our attention on bifurcation of target values of a suitable function. Our main result, Theorem \ref{mainthm} below, shows the existence of a global bifurcation branch of critical values of a $C^1$ map $f : \R \times X \to \R$, where $X$ is a real Banach space and some topological conditions are verified. This result includes the particular case when $X$ is a separable Hilbert space, $f$ is $C^2$, the Hessians of $f$ with respect to the second variable at the points $(\l,0)$, 
\[
L_\lambda:=\dfrac{\partial^2f}{\partial x^2}(\l,0):H\to H,
\]
are Fredholm and a sufficient condition to obtain bifurcation is given in terms of Morse index (Theorem \ref{mainthm2}). We also obtain a third global bifurcation result, also for the $C^2$ case, when the Hessians $L_\l$ are strongly indefinite and the Morse index is not defined. Adding a particular strong compactness assumption, which seems unremovable, we prove a global bifurcation result if
the spectral flow of $L_\l$ in a suitable interval is nonzero (Theorem \ref{mainthm3}). 

In our first theorem, we obtain the bifurcation result assuming that two suitable topological pairs of inverse images of the map $f_{\l}$ are not homotopically equivalent, for two different values of the parameter $\l$. This condition  is sufficient to give bifurcation when combined with other assumptions (see below), such as a special Palais-Smale type condition.
 
The paper is organized as follows. In Section \ref{preliminaries} we recall some basic notions of homotopic equivalence of topological pairs and we summarize the construction of the spectral flow.
In Section \ref{mresults} we present the bifurcation problem and we state our main results, Theorems \ref{mainthm} and \ref{mainthm2} below. In section \ref{defolemmas} we show some technical results concerning deformation and retraction properties which are used in the proofs of our main results. Such deformations results are original and have in our opinion  some independent interest. Section \ref{proofmainthm} is devoted to the proof of Theorems \ref{mainthm} and \ref{mainthm2}.
Finally, in Section \ref{stronglyind} we provide a bifurcation theorem for strongly indefinite functionals where the corresponding invariant for the Hessian is the spectral flow.

\section{Preliminaries}
\label{preliminaries}

First of all, let us summarize the construction of the spectral flow in the approach followed by Fitzpatrick, Pejsachowicz and Recht in \cite{FPR99}.
Let $H$ be an infinite dimensional separable real Hilbert space. Consider an orthogonal decomposition 
\begin{equation}
\label{pmH}
H=H_{+}\oplus H_{-},
\end{equation}
with $H_{+}$ and $H_{-}$ of infinite dimension. We call \emph{symmetry} the linear operator $\J:H\to H$ which can be represented, in the splitting \eqref{pmH}, by the block-matrix of operators
\[
\left(
\begin{array}{ll}
Id_{H_{+}} & 0\smallskip\\
0 & - Id_{H_{-}}
\end{array}
\right).
\]

Observe that we have infinitely many symmetries of $H$, depending of splittings like the \eqref{pmH}, and that $\J^{2}=Id$ for any symmetry $\J$. Let $\{e_{n}^{{\pm}}, \; n\in \N\}$ be two Hilbert bases of $H_{+}$ and $H_{-}$, respectively, and call $H_{n}$ the $2n$-dimensional subspace of $H$ generated by $\{e_{k}^{{\pm}}, \; k\leq n\}$. Denote by $P_{n}:H\to H_{n}$ the orthogonal projection. Consider a self-adjoint compact linear operator $K:H\to H$ and denote $L=\J+K$. Denote by $L_{n}:H_{n}\to H_{n}$ the operator given by $L_{n}=P_{n}L_{H_{n}}$ and call \emph{signature} of $L_{n}$ the integer number
\[
\sign L_{n}=\mu(-L_{n})-\mu (L_{n}),
\] 
where, as already said, $\mu(\cdot)$ is the Morse index of the considered operator. In \cite[Lemma 1.1]{FPR99}
 the following result is proven:

\medskip

\emph{suppose that the above operator $L=\J+K$ is an automorphism of $H$. Then, there is a positive integer $N$ such that $\sign L_{n}$ is constant if $n\geq N$.}

\medskip

The above eventually constant integer is called \emph{generalized signature} of $L$ with respect to $\J$ and is denoted by $\sign_{\J}(L)$. It is possible to prove that this integer actually depends on the symmetry $\J$ (as the notation suggests), but not on the chosen Hilbert bases $\{e_{n}^{{\pm}}\}$ of the subspaces of $H$ produced by $\J$.

Consider now a continuous path $K_{\l}$, $\l\in [a,b]$ of self-adjoint compact operators of $H$ with $L_{a}$ and $L_{b}$ automorphisms. Given a symmetry $\J$ of $H$, the \emph{spectral flow} of the path $L:[a,b]\to L(H)$, $L_{\l}=\J+K_{\l}$ is defined as
\begin{equation}
\label{sf-formula-1}
\spf (L,[a,b])= \dfrac{\sign_{\J}(L_{b})-\sign_{\J}(L_{a})}{2}
\end{equation}
One can prove that the above formula does not depend on $\J$ even though $\sign_{\J}(L_{a})$ and $\sign_{\J}(L_{b})$ do.

The definition of specral flow can be extended to any continuous path of self-adjoint Fredholm operators $L_{\l}$, $\l\in [a,b]$, such that $L_{a}$ and $L_{b}$ are invertible. 
In the particular case when $L_\l= T + K_\l$, where $T$ is Fredholm and self-adjoint and $K_\l$ is compact, the spectral flow is defined as 
\begin{equation}
\label{sf-formula}
\spf(L, [a, b]) = \dim (V^-(L_a) \cap V^+(L_b)) - \dim (V^-(L_b) \cap V^+(L_a)), 
\end{equation}
which is finite. In the case of a general path of self-adjoint Fredholm operators $L_\l$,
 it can be proven the existence of a path $M_\l$ of automorphisms of $H$ (called \emph{cogredient parametrix}) such that
\[
M_\l^*L_\l M_\l = T + K_\l,
\]
where $M_\l^*$ is the adjoint of $M_\l$, $T$ is Fredholm and self-adjoint and $K_\l$ is compact. 
Hence, the definition $\spf(L, [a, b])$ is given by \eqref{sf-formula} applied to $T + K_\l$ and this does not depend on the choice of $M_\l$.

\medskip

We now recall some basic definitions of homotopy theory. Consider a pair of topological spaces, that is, a pair $(X,A)$ such that $A\subseteq X$. A function between pairs of topological spaces $F:(X,A) \to (Y,B)$ is a continuous function $F:X \to Y$ such that $F(A) \subseteq B$.
Two functions $F,G:(X,A) \to (Y,B)$ are \emph{homotopic} if there exists $H:[0,1] \times X \to Y$ such that $H(0,x) = F(x),\; H(1,x) = G(x)$ and $H(t, x) \in B$ for all $x\in A$ and all $t\in [0,1]$.
Two pairs $(X,A)$ and $(Y, B)$ are \emph{homotopically equivalent} if there exist two functions $F:(X, A) \to (Y,B)$ and $G:(Y,B) \to (X,A)$ such that $G\circ F$ and $F \circ G$ are homotopic to the identity (as functions between pairs).
If this is the case, we have isomorphisms in the relative homology groups $H_i(X,A) \cong H_i(Y,B)$ for every $i$, see for example \cite[pag.\ 118]{H}.
It suffices to consider here singular homology with coefficients in $\R$.

For a continuous function $\varphi:X \to \R$, a critical point $x \in X$ and any $k \in \N$ we consider the  $k$th-local critical group $C_k(\varphi,x)$ (see e.g.\ \cite{KCC,MW} for the definition).
The key ingredient to prove local bifurcation of critical points is the invariance of the critical groups under small perturbations.
The next theorem is proved in \cite[Section 8.9]{MW}. 

\begin{theorem}
Let $U$ be an open neighbourhood of a given point $v$ in a Hilbert space $H$ and 
{consider a map} $\varphi\in C^{2}(U,\R)$
having $v$ as the only critical point and satisfying the Palais-Smale $(PS)$ condition 
over a closed ball $B(v,r) \subseteq U$.

Then, there exists $\eta > 0$, depending only upon $\varphi$, such that for any 
$\psi$ satisfying the same assumptions
the condition
\[\sup_{u \in U}(|\psi(u)-\varphi(u)|) + (|\nabla \psi(u)-\nabla \varphi(u)|) \leq \eta\]
implies
\[\dim C_k(\psi, v) = \dim C_k(\varphi, v), k \in \N.\]
\end{theorem}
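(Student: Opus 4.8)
The plan is to reduce the stability of the dimensions of the local critical groups $C_k(\psi,v)$ to two ingredients: first, a $C^1$-smallness argument that confines the only critical points of $\psi$ near $v$ to a small ball where we have quantitative control, and second, a Morse-type deformation that relates the sublevel sets of $\psi$ near $v$ to those of the unperturbed $\varphi$. More precisely, I would first use the $(PS)$ condition and the fact that $v$ is the \emph{unique} critical point of $\varphi$ in $B(v,r)$ to produce, for each sufficiently small $\rho<r$, a constant $\delta>0$ such that $\|\nabla\varphi(u)\|\ge\delta$ on the annulus $\rho\le\|u-v\|\le r$. Then, if $\eta<\delta/2$ (roughly), any $\psi$ with $\sup_U\big(|\psi-\varphi|+\|\nabla\psi-\nabla\varphi\|\big)\le\eta$ has no critical points in that annulus either; combined with a $(PS)$ argument for $\psi$, this shows that all critical points of $\psi$ in $B(v,r)$ lie in the small ball $B(v,\rho)$. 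This localizes the problem.

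Next I would invoke the excision/continuation property of critical groups: shrinking $\rho$ does not change $C_k(\varphi,v)$, and by the finite-dimensionality that one can extract from the $(PS)$ condition and the $C^2$ hypothesis (the negative eigenspace of the Hessian is finite-dimensional, so locally $\varphi$ splits, via a Morse--Gromoll-type lemma, as a nondegenerate quadratic form on a finite-dimensional space plus a function with a possibly-degenerate critical point on a complement), the critical groups are computed on a finite-dimensional reduced manifold. On this finite-dimensional piece the $C^1$-closeness of the gradients is inherited by the reduced functionals, and a compact-set gradient-flow deformation argument — pushing sublevel sets of $\psi$ onto sublevel sets of $\varphi$ along a suitably interpolated pseudo-gradient vector field — yields a homotopy equivalence of the relevant pairs $\big(\varphi^{c+\varepsilon}\cap B,\ \varphi^{c-\varepsilon}\cap B\big)$ and $\big(\psi^{c+\varepsilon}\cap B,\ \psi^{c-\varepsilon}\cap B\big)$, hence isomorphic relative homology, hence $\dim C_k(\psi,v)=\dim C_k(\varphi,v)$ for all $k$.

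I expect the main obstacle to be the deformation step in the presence of possible degeneracy: if the Hessian of $\varphi$ at $v$ is singular, one cannot simply use the implicit function theorem, and one must combine the finite-dimensional reduction (Gromoll--Meyer splitting lemma) with a careful construction of a pseudo-gradient flow for $\psi$ that is uniformly transverse to the sublevel boundaries on a compact set, using only the $C^1$-smallness of $\nabla\psi-\nabla\varphi$ and the $(PS)$ condition to control the flow near the critical level. Getting the bookkeeping right — that the choice of $\eta$ depends only on $\varphi$ (through $\delta$, $r$, and the modulus of the splitting lemma), not on $\psi$ — is the delicate point; once that uniformity is in hand, the homological conclusion is immediate from the homotopy equivalence of pairs recalled in the Preliminaries.
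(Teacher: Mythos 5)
Note that the paper does not give its own proof of this theorem: it is quoted from \cite[Section 8.9]{MW} (building on Rothe's work on continuity of critical groups), and is invoked only as background motivation for the local bifurcation discussion. So there is no internal proof to compare against, only the cited one.

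Your localization step --- using PS and the uniqueness of the critical point to get a uniform lower bound for $\|\nabla\varphi\|$ on an annulus $\rho\le\|u-v\|\le r$, then $C^1$-closeness to confine critical points of $\psi$ to a small ball --- is sound and is indeed the first step of the standard argument. The gap is in the reduction step. You assert that the PS condition together with $C^2$ regularity implies the negative eigenspace of the Hessian of $\varphi$ at $v$ is finite-dimensional and that a Gromoll--Meyer splitting is available; neither follows from the stated hypotheses. The theorem does not assume the Hessian at $v$ is Fredholm, and PS on a ball does not force it: $\varphi(u)=\|u-v\|^4$ is $C^2$ on any infinite-dimensional Hilbert space, has $v$ as its unique critical point, satisfies PS on every ball, yet its Hessian at $v$ vanishes identically, so it is neither Fredholm nor splittable. (Even in the Fredholm case you have the roles of the factors reversed: the finite-dimensional factor carrying the possibly-degenerate part of the splitting is the \emph{kernel} of the Hessian, not the negative eigenspace, while the nondegenerate quadratic part lives on a complement that may be infinite-dimensional.) The route taken in \cite{MW}, and the one you should take, avoids any reduction: interpolate $h_s=(1-s)\varphi+s\psi$, use your localization to check that for $\eta$ small each $h_s$ has $v$ as its only critical point in the small ball and still satisfies PS there, and then apply an infinite-dimensional pseudo-gradient deformation lemma, which needs only PS and not any finiteness of the Hessian. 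The homotopy equivalence of the pairs of sublevel sets and the equality of the critical groups then follow as you intend, with $\eta$ determined by $\varphi$ alone through the lower gradient bound on the annulus.
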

As a consequence, if a function $f$ as in Theorem A has non-vanishing spectral flow between $a$ and $b$, then we have 
\[\dim C_k(f_a, 0) \neq \dim C_k(f_b, 0)\]
and thus $0 \in X$ cannot be isolated as a critical point for every $\lambda$.

We will consider a global version of the local critical groups explained by Theorem \ref{mainprelimthm} below, that can be found in \cite[Theorem 5.1.27]{KCC}. We need first the following definition. 
\begin{definition}
\label{def_sublevelset}
Let $\varphi:X \to \R$ be any continuous function, denote 
\[
\sls{\varphi}{a}=\{x \in X/ \varphi(x) \leq a\}.
\]

\end{definition}

\begin{theorem}
\label{mainprelimthm}
Assume that $\varphi: X \to \R$ is $C^1$ and satisfies the $(PS)$ condition. Suppose that $c$ is an isolated critical value of $\varphi$, where the critical points of $\varphi$ in $\varphi^{-1}(c)$ are $z_1, \ldots, z_m$.
Then for sufficiently small $\varepsilon > 0$ we have
\[H_k(\sls \varphi {c+\varepsilon}, \sls \varphi {c-\varepsilon}) = \bigoplus_{j=1}^m C_k(\varphi, z_j)\]
for every $k = 0,1,2, \ldots$
\end{theorem}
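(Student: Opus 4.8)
The plan is to reduce the claim to the standard deformation lemmas of critical point theory, localizing the relative homology near the finite critical set $K_c:=\{x\in\varphi^{-1}(c):\varphi'(x)=0\}=\{z_1,\dots,z_m\}$ and then excising everything far from it. First I would fix the scale. Because $c$ is an isolated critical value and there are only finitely many critical points at level $c$, each $z_j$ is an isolated critical point, so one may pick pairwise disjoint open sets $U_1,\dots,U_m$ with $z_j\in U_j$ and $z_j$ the only critical point of $\varphi$ in $\overline{U_j}$. Using the $(PS)$ condition one then shrinks things so that, for a suitable $\varepsilon_0>0$ and every $0<\varepsilon\le\varepsilon_0$, the strip $\{c-\varepsilon\le\varphi\le c+\varepsilon\}$ contains no critical point outside $U:=\bigcup_{j=1}^{m}U_j$: otherwise a sequence $x_n\notin U$ with $\varphi(x_n)\to c$ and $\varphi'(x_n)\to0$ would, by $(PS)$, accumulate at a critical point at level $c$ outside $U$, which is impossible. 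As by-products, $[c-\varepsilon,c+\varepsilon]$ has $c$ as its only critical value, and $\|\varphi'\|$ is bounded away from zero on $\{c-\varepsilon\le\varphi\le c+\varepsilon\}\setminus U$, again by $(PS)$.

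The heart of the argument, and the step I expect to be the main obstacle, is a ``second deformation lemma'': for $\varepsilon\le\varepsilon_0$ small enough, $\sls\varphi{c+\varepsilon}$ deformation retracts onto $\sls\varphi{c-\varepsilon}\cup\big(U\cap\sls\varphi{c+\varepsilon}\big)$. I would construct a locally Lipschitz pseudo-gradient vector field $W$ for $\varphi$ on its regular set, normalize it so that $\varphi$ decreases at a controlled rate along its flow, multiply it by a locally Lipschitz cut-off equal to $1$ on $\{c-\varepsilon\le\varphi\le c+\varepsilon\}\setminus U$ and vanishing on a neighbourhood of $K_c$ and outside the strip, and integrate the resulting vector field. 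The uniform lower bound on $\|\varphi'\|$ obtained above is exactly what makes this flow complete (no finite-time escape) and forces every trajectory starting in $\sls\varphi{c+\varepsilon}$ either to reach level $c-\varepsilon$ in finite time or to be eventually absorbed into $U$; assembling these trajectories gives the deformation. The delicate points are the completeness of the cut-off flow and the bookkeeping that guarantees precisely the complement of a neighbourhood of the $z_j$ is pushed below $c-\varepsilon$.

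Granting this, the rest is formal. The inclusion of pairs $\big(\sls\varphi{c-\varepsilon}\cup(U\cap\sls\varphi{c+\varepsilon}),\,\sls\varphi{c-\varepsilon}\big)\hookrightarrow\big(\sls\varphi{c+\varepsilon},\,\sls\varphi{c-\varepsilon}\big)$ is then a homotopy equivalence of pairs and hence induces isomorphisms on every $H_k$. Writing $Y:=\sls\varphi{c-\varepsilon}\cup(U\cap\sls\varphi{c+\varepsilon})$, the set $U\cap\sls\varphi{c+\varepsilon}$ is open in $Y$ while $Y\setminus(U\cap\sls\varphi{c+\varepsilon})=\sls\varphi{c-\varepsilon}\setminus U$ lies in the interior of $\sls\varphi{c-\varepsilon}$ relative to $Y$, so excision applies and gives
\[
H_k\big(\sls\varphi{c+\varepsilon},\sls\varphi{c-\varepsilon}\big)\cong H_k\big(U\cap\sls\varphi{c+\varepsilon},\,U\cap\sls\varphi{c-\varepsilon}\big)\cong\bigoplus_{j=1}^{m}H_k\big(U_j\cap\sls\varphi{c+\varepsilon},\,U_j\cap\sls\varphi{c-\varepsilon}\big),
\]
the last isomorphism because the $U_j$ are pairwise disjoint. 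Finally, inside each $U_j$, where $z_j$ is the only critical point and $c$ the only critical value in $[c-\varepsilon,c+\varepsilon]$, the same kind of flow argument applied locally, together with the standard fact that the local critical groups may be computed in any such neighbourhood, identifies the $j$-th summand with $H_k\big(\sls\varphi{c}\cap U_j,\,(\sls\varphi{c}\setminus\{z_j\})\cap U_j\big)=C_k(\varphi,z_j)$. Summing over $j$ gives the asserted formula for every $k$.
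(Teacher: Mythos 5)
The paper does not prove Theorem \ref{mainprelimthm}: it is quoted directly from Chang, \cite[Theorem 5.1.27]{KCC}, where the argument you sketch is exactly the one given. Your outline (localization via the $(PS)$ condition, a truncated pseudo-gradient flow retracting $\sls\varphi{c+\varepsilon}$ onto $\sls\varphi{c-\varepsilon}\cup\left(U\cap\sls\varphi{c+\varepsilon}\right)$, excision, and the local identification of each summand with $C_k(\varphi,z_j)$) is correct and follows essentially the same route as the cited reference.
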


Theorem \ref{mainprelimthm} will play a central role in the proof of our main results.

\section{Main Results}
\label{mresults}

{\bf Standing assumption.} Let $X$ be a real Banach space. In what follows (and unless otherwise explicitely stated) $f :\R \times X \to \R$ will stand for a $C^1$ map such that 
\begin{equation}
\label{zero-image}
f(\lambda,0) = 0 \quad \textrm{and} \quad\frac{\partial f}{\partial x} (\lambda,0) =f'_\lambda(0)= 0, \quad \forall \lambda\in \R.
\end{equation}

The map $x\in X\mapsto f(\lambda,x)$, defined for a given real $\lambda$, will be also denoted by $f_{\lambda}$ and its Fr\'echet derivative at a point $x$ by $f'_\lambda(x)$.
The following set 
\[
S_f = \{(\lambda, y) \in \R^2 / f_\lambda(x) = y,\  f'_\lambda(x) = 0 \hbox{ for some } x \in X\}
\]
will be called the set of \emph{critical pairs}.
In other words, $S_f$ is the set of pairs $(\lambda, y)$ such that $y$ is a critical value of $f_\lambda$. The line $Z = \R \times \{0\} \subseteq S_f$ is regarded as the set of the \emph{trivial critical pairs}.
We say that a trivial critical pair $(\l_{0},0) \in Z$ is a \emph{bifurcation point} if every neighbourhood of $(\l_{0},0)$ contains non-trivial critical pairs.
Then, we see that 
\begin{equation}
\label{setE}
E_f := \overline{S_f \setminus Z} 
\end{equation}
is the union of the bifurcation points and the non-trivial critical pairs.
We will also split, when necessary, the sets of trivial critical pairs into the following two subsets:
\[\Zin  = [-1,1] \times \{0\} \subset \R^2, \quad \Zout = \left( (-\infty, -1] \cup [1,\infty) \right) \times \{0\} \subset \R^2.\]

\begin{definition}
[(PS)-type conditions]\label{def_PSI}
Let $I \subset \R$ be a compact interval and $c\in \R$ a given value. We say that $f$ satisfies the \ps{I}{c} condition if, for every sequence $(\lambda_n, x_n) \in I \times X$ such that $f_{\lambda_n}(x_n) \to c$ and such that $f_{\lambda_n}'(x_n) \to 0 \in X^*$, there exists a sub-sequence $(x_{n_k})$ converging to a point  $ x_0 \in X$.

Let $J \;  \subset  \; \R$, we say that $f$ satisfies the \ps{I}{J} condition if it satisfies the \ps{I}{c} condition for every $c \in J$.
We say that $f$ satisfies the \ps{\R}{\R} condition if it satisfies the \ps{[-N,N]}{\R} condition for every $N \in \N$.
\end{definition}

We are now in a position to state the following two theorems which, associated with Theorem \ref{mainthm3}, are our main results. Actually, Theorem \ref{mainthm2} is a consequence of Theorem \ref{mainthm}, particularly important in applications. The proof of both results will be given in Section \ref{proofmainthm}.

\begin{theorem}
\label{mainthm}
Let $f : \R \times X \to \R$ be a $C^1$ function verifying the standing assumption \eqref{zero-image} above.  Denote by $\partf: \R \times X \to \R$ the map
\[
\partf(\lambda, x) = \frac{\partial f}{\partial \lambda}(\lambda, x).
\]
Denote by $I$ the interval $ [-1,1]$ and assume that  the following assumptions hold: 
\begin{enumerate}[label={\normalfont \roman*\_}, ref = \ref{mainthm}-\roman*]
\item \label{mainthm_ps} $f$ satisfies the \ps{\R}{\R} condition.
\item \label{mainthm_partf_bounded} $\partf$ is bounded in the sets of the form 
\[
f_{[-N,N]}^{-1}([-N,N]):=\{(\l,x)\in [-N,N] \times X: f(\l,x)\in [-N,N]\},  \quad \forall N \in \N.
\]
\item \label{mainthm_nonbif} The trivial critical pairs $(-1,0), (1,0)$ are not bifurcation points of $f$.
\item \label{mainthm_specflow} There exists $\varepsilon_* > 0$ such that for every $0 < \varepsilon < \varepsilon_*$ the 
pairs of spaces $\left(\sls{f_{-1}}{\varepsilon}, \sls{f_{-1}}{-\varepsilon}\right)$ and $\left(\sls{f_1}{\varepsilon}, \sls{f_1}{-\varepsilon}\right)$ are not homotopically equivalent.
\end{enumerate}
Then, $E_f$ contains a connected subset intersecting $\Zin $ which either
\begin{enumerate}[label={\normalfont \arabic*\_}, ref = \ref{mainthm}-\arabic*]
\item \label{mainthm_unbounded} is unbounded in $\R^2$, or else
\item \label{mainthm_intersects} intersects $\{-1,1\} \times X$.
\end{enumerate}
%
\end{theorem}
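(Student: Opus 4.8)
The plan is to adapt the classical Rabinowitz global-bifurcation scheme, replacing the Leray--Schauder degree with the relative-homology invariant furnished by Theorem \ref{mainprelimthm}. Consider the function $g:\R\times X\to\R$, $g(\l,x)=f(\l,x)$, and for each $\l$ study the sublevel sets of $f_\l$. Assumption \ref{mainthm_partf_bounded} together with \ref{mainthm_ps} is what will let us pass $(PS)$-type information across the parameter interval: first I would show that, for $\l$ in any compact interval and $f_\l(x)$ in any compact interval, the hypotheses force the critical values of $f_\l$ to vary in a controlled (upper/lower semicontinuous) way, so that $0$ is an isolated critical value of $f_{-1}$ and of $f_1$ (this uses \ref{mainthm_nonbif}, which says there are no nontrivial critical pairs near $(\pm1,0)$, hence near level $0$ the only critical point of $f_{\pm1}$ is $x=0$). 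Then Theorem \ref{mainprelimthm} identifies, for small $\varepsilon$,
\[
H_k\!\left(\sls{f_{\pm1}}{\varepsilon},\sls{f_{\pm1}}{-\varepsilon}\right)\;\cong\;C_k(f_{\pm1},0),
\]
and assumption \ref{mainthm_specflow} says precisely that these relative homologies differ for $\l=-1$ and $\l=1$; in particular the pair of topological pairs is not homotopy equivalent, which is the obstruction we exploit.

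Next I would set up the separation/connectedness argument. Let $C$ be the connected component of $E_f$ meeting $\Zin$ that contains the point (or points) of $Z$ near which bifurcation is forced; the real content is to show such a component exists and then that it must be unbounded or reach $\{-1,1\}\times X$. Suppose, for contradiction, that the component $C$ of $E_f\cap\big([-1,1]\times\R\big)$ meeting $\Zin$ is bounded and does not touch $\{-1,1\}\times X$, i.e.\ stays in the open strip $(-1,1)\times\R$. By a standard topological separation lemma (Whyburn-type: a compact set containing no component joining two disjoint closed sets can be separated by an open set whose boundary misses the set), one produces a bounded open set $\Omega\subset(-1,1)\times\R$ containing $C$ with $\partial\Omega\cap E_f=\emptyset$. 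The boundary $\partial\Omega$ thus consists only of trivial critical pairs (points of $Z$) and of points that are not critical pairs at all. Now slice: for $\l$ near $-1$ and $\l$ near $1$ the slice $\Omega_\l$ is empty (since $C$ does not reach the ends and $\Omega$ is bounded away from them away from $C$ — here one shrinks $\Omega$), whereas in between $\Omega$ captures the whole bifurcating set; the contradiction will come from comparing the relative-homology invariant of the sublevel pair at $\l=-1$ with that at $\l=1$ along a path where, outside $\Omega$, there are no critical pairs near level $0$.

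Concretely, for each fixed $\l\in[-1,1]$ pick $\varepsilon(\l)>0$ small and consider $h(\l):=H_*\big(\sls{f_\l}{\varepsilon(\l)},\sls{f_\l}{-\varepsilon(\l)}\big)$, the relative homology at level $0$. Using the deformation/retraction lemmas promised in Section \ref{defolemmas} together with the $(PS)_{I,J}$ condition and the bound on $\partf$ (which controls how fast sublevel sets move as $\l$ varies: $\frac{\partial}{\partial\l}f(\l,x)=\partf(\l,x)$ is bounded on the relevant region, so $\sls{f_{\l'}}{\pm\varepsilon}$ deformation-retracts onto $\sls{f_\l}{\pm\varepsilon'}$ when $|\l'-\l|$ is small and no critical pair of level near $0$ lies in between), one shows $h$ is locally constant in $\l$ on the complement of the $\l$-projection of $E_f$. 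If $\Omega$ separates $C$ as above, then on a neighbourhood of $[-1,1]$ minus the (bounded) $\l$-projection of $C$ the invariant $h$ is constant; but the projection of $C$ does not contain $-1$ or $1$, so $h(-1)$ and $h(1)$ are linked by a chain of equalities through values of $\l$ avoiding $\mathrm{proj}_\l(C)$ — forcing $h(-1)\cong h(1)$, hence the two sublevel pairs homotopy-equivalent, contradicting \ref{mainthm_specflow}. Therefore no such bounded, end-avoiding component exists, giving alternative \ref{mainthm_unbounded} or \ref{mainthm_intersects}.

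The main obstacle, I expect, is the gluing of the homological invariant into a genuinely \emph{locally constant} function of $\l$ across the strip: one must show that as $\l$ moves (with no critical pair at level $\approx0$ present), the pair $\big(\sls{f_\l}{\varepsilon},\sls{f_\l}{-\varepsilon}\big)$ changes only by homotopy equivalence. This is where the boundedness of $\partf$ on $f_{[-N,N]}^{-1}([-N,N])$ and the $(PS)$ conditions must be combined into a quantitative deformation estimate — essentially a parametrized version of the noncritical-interval deformation lemma — and it is precisely the technical heart deferred to Section \ref{defolemmas}. A secondary difficulty is the careful bookkeeping needed to guarantee that the separating open set $\Omega$ can be chosen bounded, inside the open strip, and with slices vanishing near the two ends; this is a Whyburn-lemma argument but must be executed with $E_f$ (a closed set that need not be locally connected) rather than a nice manifold.
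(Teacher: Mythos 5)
Your high-level plan shares several ingredients with the paper's proof --- a Whyburn-type separation, the deformation lemmas of Section \ref{defolemmas}, Theorem \ref{mainprelimthm}, and a contradiction with assumption \eqref{mainthm_specflow} --- but the central mechanism is incorrect, and the argument as written cannot be completed.

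The gap is in the step where you claim that the invariant $h(\lambda)=H_*\bigl(\sls{f_\lambda}{\varepsilon(\lambda)},\sls{f_\lambda}{-\varepsilon(\lambda)}\bigr)$, computed with small thresholds $\pm\varepsilon(\lambda)$ near level $0$, is locally constant on the complement of $\mathrm{proj}_\lambda(E_f)$ (or of $\mathrm{proj}_\lambda(C)$), and that one can chain equalities from $\lambda=-1$ to $\lambda=1$ through values of $\lambda$ avoiding that projection. This cannot be done. The connected set forced by the hypotheses contains bifurcation points $(\lambda_0,0)$ with $\lambda_0\in(-1,1)$, and in general $E_f$ near level $0$ may project onto all of $(-1,1)$ (for instance if $E_f$ contains an interval of bifurcation points, or if nontrivial critical pairs near level $0$ occur for every interior $\lambda$). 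Then there is no path of $\lambda$-values from $-1$ to $1$ along which the fixed-level invariant can be propagated, and the chain of equalities you need simply does not exist. The obstruction lives in the $(\lambda,y)$-plane, not just on the $\lambda$-axis.

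The missing idea, which is the actual heart of the paper's proof, is that the threshold levels must be allowed to vary along with $\lambda$: one constructs two curves in the plane $\R^2$, one in the closed upper half-strip $[-1,1]\times[0,R]$ and one in the lower half-strip, joining $(\mp 1,\pm\varepsilon/2)$ across the rectangle while avoiding $E_f$ entirely (this is exactly what Lemma \ref{lemma_square} delivers once alternative \eqref{mainthm_intersects} is negated). In the middle of the interval these curves are far from level $0$ --- they go \emph{around} the bounded set $G_f$. After the reparametrization of Lemma \ref{lemma_two_curves} one obtains genuine functions $a(\lambda)<0<b(\lambda)$ at the endpoints, with $a,b$ regular values of $f_\lambda$ for every $\lambda$ along the path, and Theorem \ref{thm_deformation2} (whose thresholds are functions, not constants) then propagates the homotopy equivalence of the pairs from $\lambda=-1$ to $\lambda=1$. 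Your plan, which insists on constant small levels $\pm\varepsilon$, never invokes this version of the deformation theorem and cannot cross the region where $E_f$ lies near level $0$; consequently it cannot reach the desired contradiction.
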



\begin{theorem}
\label{mainthm2}
Let $H$ be a separable real Hilbert space and consider a $C^2$ function $f : \R \times H \to \R$. Assume that
\[
f(\lambda,0) = 0\quad \textrm{and} \quad  \nabla f_\lambda( 0) = 0,  \quad \quad \forall \lambda \in \R.
\] 
 
Denote by $I$ the interval $ [-1,1]$ and assume that  the following assumptions hold: 

\begin{enumerate}[label={\normalfont \roman*\_}, ref = \ref{mainthm2}-\roman*]
\item \label{mainthm2_ps} $f$ satisfies the \ps{\R}{\R} condition.
\item \label{mainthm2_partf_bounded} $\partf$ is bounded in the sets of the form $f_{[-N, N]}^{-1}([-N, N])$ for every $N \in \N$.
\item \label{mainthm2_nondeg} For $i = -1,1$, the point $0 \in H$ is a non-degenerate critical point and the only critical point of $f_i$ with value $0$. 
\item \label{mainthm2_specflow} Assume that, for every $\lambda\in I$, the Hessian of $f$ at zero, 
\[
L_\lambda:=\dfrac{\partial^2f}{\partial x^2}(\l,0):H\to H
\]
is Fredholm for every $\l\in [-1,1]$ and suppose
\[
\mu(L_{-1}) \neq \mu(L_1).
\]
where $\mu$ denotes the Morse index.

\end{enumerate}
Then the conclusion of Theorem \ref{mainthm} holds.
\end{theorem}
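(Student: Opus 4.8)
The plan is to derive Theorem \ref{mainthm2} from Theorem \ref{mainthm}, so the main task is to verify that the hypotheses of Theorem \ref{mainthm2} imply those of Theorem \ref{mainthm}. Assumptions \ref{mainthm2_ps} and \ref{mainthm2_partf_bounded} are literally hypotheses \ref{mainthm_ps} and \ref{mainthm_partf_bounded}, and a $C^2$ function is in particular $C^1$ with $\partial f/\partial x(\lambda,0)=\nabla f_\lambda(0)=0$, so the standing assumption \eqref{zero-image} holds. Thus only \ref{mainthm_nonbif} (the points $(\pm1,0)$ are not bifurcation points) and \ref{mainthm_specflow} (non-equivalence of the relevant topological pairs) need to be checked from \ref{mainthm2_nondeg} and \ref{mainthm2_specflow}.

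For \ref{mainthm_nonbif}: since $0$ is a non-degenerate critical point of $f_i$ for $i=\pm 1$, the Hessian $L_i$ is an invertible self-adjoint Fredholm operator, and by the Morse–Palais lemma (in Hilbert space, using that $f$ is $C^2$) there is a local chart near $0$ in which $f_i$ is a nondegenerate quadratic form; in particular $0$ is an isolated critical point of $f_i$. More importantly, for $\lambda$ near $i$ the point $0$ stays the only critical point in a neighborhood: non-degeneracy of $L_i$ is an open condition, and one can invoke the implicit function theorem applied to $(\lambda,x)\mapsto \nabla f_\lambda(x)$ near $(i,0)$ — since $\partial_x(\nabla f_\lambda)(i,0)=L_i$ is invertible, the only solution of $\nabla f_\lambda(x)=0$ near $(i,0)$ is $(\lambda,0)$ (using that $\nabla f_\lambda(0)=0$ for all $\lambda$). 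Hence no nontrivial critical pairs accumulate at $(i,0)$, so $(i,0)$ is not a bifurcation point. This gives \ref{mainthm_nonbif}.

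For \ref{mainthm_specflow}: the reasoning in the Preliminaries section is exactly the bridge. Since $L_i$ is invertible and Fredholm, by the Morse lemma the local critical group $C_k(f_i,0)$ equals $C_k$ of a nondegenerate quadratic form with Morse index $\mu(L_i)$, which is $\R$ if $k=\mu(L_i)$ and $0$ otherwise. (Here $\mu(L_i)$ is finite: invertible Fredholm self-adjoint operators on $H$ have $H=V^-\oplus V^+$ with $\ker=0$; the Morse index is finite precisely when $V^-$ is finite dimensional — this must be part of the intended reading of \ref{mainthm2_specflow} together with the hypothesis $\mu(L_{-1})\ne\mu(L_1)$, which presupposes both are finite.) By \ref{mainthm2_nondeg}, $0$ is the only critical point of $f_i$ with critical value $0$, so applying Theorem \ref{mainprelimthm} with $c=0$ gives, for small $\varepsilon>0$,
\[
H_k\!\left(\sls{f_i}{\varepsilon},\sls{f_i}{-\varepsilon}\right)=C_k(f_i,0),
\]
which for $i=\pm1$ is concentrated in degree $\mu(L_i)$. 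Since $\mu(L_{-1})\ne\mu(L_1)$, these graded homology groups differ, so the pairs $\left(\sls{f_{-1}}{\varepsilon},\sls{f_{-1}}{-\varepsilon}\right)$ and $\left(\sls{f_1}{\varepsilon},\sls{f_1}{-\varepsilon}\right)$ cannot be homotopically equivalent (homotopy equivalence of pairs induces isomorphisms on relative homology, as recalled above). This yields \ref{mainthm_specflow} with any $\varepsilon_*$ small enough that both Morse-lemma charts and Theorem \ref{mainprelimthm} apply. With all four hypotheses of Theorem \ref{mainthm} verified, its conclusion holds, which is precisely the conclusion of Theorem \ref{mainthm2}. $\qed$

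The step I expect to be the most delicate is the application of the Morse lemma to identify the local critical groups $C_k(f_i,0)$ in the strongly-indefinite-but-Morse-index-finite setting: one needs a version of the Morse–Palais lemma valid on an infinite-dimensional Hilbert space for a $C^2$ functional with invertible (not necessarily positive) Fredholm Hessian, and then a splitting argument to reduce the critical group computation to the finite-dimensional negative eigenspace. This is classical (see \cite{MW,KCC}), but it is where the real content beyond pure bookkeeping lives; everything else is a matter of matching hypotheses and quoting Theorem \ref{mainprelimthm}.
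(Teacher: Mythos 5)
Your overall route is the same as the paper's: reduce Theorem \ref{mainthm2} to Theorem \ref{mainthm} by matching hypotheses, and in particular obtain \ref{mainthm_specflow} from \ref{mainthm2_specflow} by computing the local critical groups $C_k(f_{\pm1},0)$ via the finite-dimensional (Morse lemma) reduction, feeding them into Theorem \ref{mainprelimthm}, and observing that homotopy equivalence of the pairs would force $C_k(f_{-1},0)\cong C_k(f_1,0)$ for all $k$, contradicting $\mu(L_{-1})\neq\mu(L_1)$. This part is correct and is a fleshed-out version of the paper's ``the remainder of the proof is standard.''

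There is, however, a gap in your verification of \ref{mainthm_nonbif}. Your IFT argument at $(\pm1,0)$ only shows that for $\lambda$ near $\pm1$ there are no nontrivial critical \emph{points} $x$ of $f_\lambda$ \emph{near} $0$. But in this paper a bifurcation point refers to critical \emph{pairs} $(\lambda,y)\in\R^2$: $(\pm1,0)$ fails to be a bifurcation point means that near it there are only trivial pairs $(\lambda,0)$, and a nontrivial critical pair $(\lambda_n,y_n)\to(\pm1,0)$ with $y_n\neq0$ could a priori arise from critical points $x_n\in H$ far from $0$. To rule this out you need to combine the \ps{\R}{\R} condition (\ref{mainthm2_ps}), to extract a convergent subsequence $x_n\to x_0$ with $\nabla f_{\pm1}(x_0)=0$ and $f_{\pm1}(x_0)=0$, with the uniqueness half of \ref{mainthm2_nondeg} (that $0$ is the \emph{only} critical point of $f_{\pm1}$ with value $0$) to conclude $x_0=0$; only then does your implicit function theorem argument near $(\pm1,0)$ yield the contradiction. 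You invoked the uniqueness part of \ref{mainthm2_nondeg} in the argument for \ref{mainthm_specflow}, but it is equally needed (together with PS) here, and your write-up of the non-bifurcation step omits it.
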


One may ask if Theorem \ref{mainthm} works with critical points instead of critical values, as in the bifurcation theorems of Rabinowitz \cite{Rab}.
The following example shows otherwise.
\begin{example}
\label{eversion}
\normalfont
A classic problem in analysis consists in proving the existence of a {\it cone eversion}.
This is a smooth
 function $c:I \times C \to \R$ where 
\[C=\{x \in \R^2/ 1\leq \|x\| \leq 2\},\] such that 
\[c(-1,x) = {\|x\|}, \ \ c(1,x) = - {\|x\|}\]
and such that $c_\lambda$ has no critical points in $C$ for $\lambda \in I$.
The existence of such a function may seem counterintuitive but actually is guaranteed by the Parametric Holonomic Approximation Theorem, see Example 4.1.1 \cite{EM}.
An explicit formula for $c$ was computed in \cite{T}.
We construct $f :\R \times X \to X$ with $X = \R^2$ as follows.

The formula for $c$ in polar coordinates as given in \cite{T} is
\[c(\lambda, (\alpha, r)) = 2t + g(\lambda, \alpha) + (r-2) h(\lambda, \alpha)\]
for two functions $g,h$ satisfying 
\[\left( \frac{\partial g}{\partial \alpha}(\lambda, \alpha), h(\lambda, \alpha)\right) \neq (0,0)\]
for all $(\lambda, \alpha)$.

Consider 
$p_{a,b}(r) = (3 a-b+1)r^2 + (-2 a+b-2)r^3 +r^4$
, which is the polynomial function satisfying the properties
\[
p_{a,b}(0)=p_{a,b}'(0) = 0, \ p_{a,b}(1)=a,\ p'_{a,b}(1)=b,
\]
and define
\[f(\lambda, (\alpha, r) ) = p_{g(\lambda, \alpha), 3 h(\lambda, \alpha)}( r ).\]

We easily verify
\begin{enumerate}[label={\normalfont \roman*}, ref = \roman*]
\item \label{example_g_ps} $f(\lambda, x), \nabla f(\lambda,x) \to +\infty$ uniformly in $\lambda$, as $x \to \infty$.
\item \label{example_g_crit} $f(\lambda, 0) = 0, \ \ \nabla f(\lambda, 0) = 0$  for all $\lambda \in I$
\item \label{example_g_spf} $f(-1,(\alpha, r) ) = 4 r^2 - 3 r^3 + r^4 \\ f(1,(\alpha, r)) = -2 r^2 - r^3 + r^4$ 
\item \label{example_g_nocrit} $\nabla f(\lambda,x) \neq 0$ for $\|x\| = 1$, $\lambda \in I$.
\end{enumerate}

\begin{center}
\includegraphics[scale=.7]{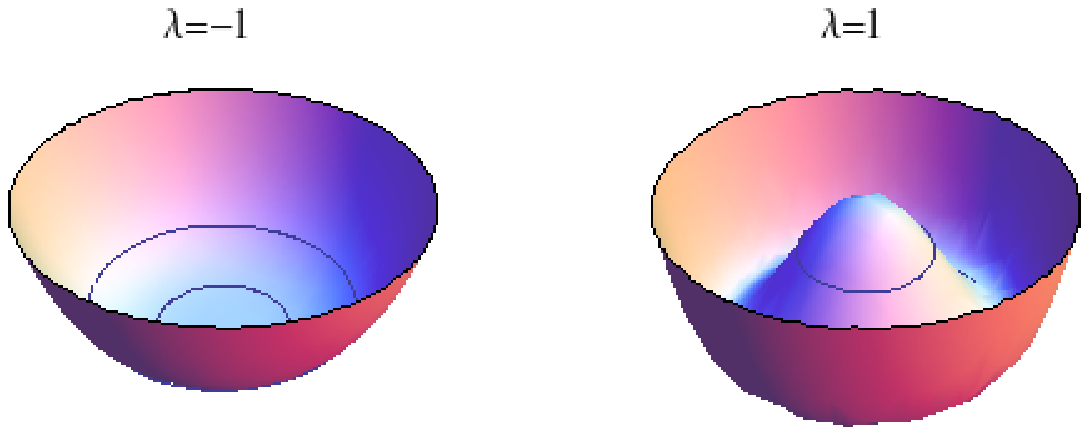}
\end{center}
Property \eqref{example_g_spf} implies $\spf(f'(.,0),I) = -2$, and in view of the relation between the spectral flow and the topological index, we have
\[\ind(\nabla f_{-1}, 0) . \ind(\nabla f_{1}, 0) = (-1)^{\spf(f'(.,0),I)} = 1\]
and $\nabla f$ does not satisfy the necessary hypothesis for the ``classical'' bifurcation theorem.
Also, one may check that $f$ is in the conditions of Theorem \ref{mainthm}.

Let 
\[K_f = \{(\lambda, x) \in I \times X/ \nabla f_\lambda(x) = 0\}\]
 and notice that condition \eqref{example_g_nocrit} forces $K_f \cap (I \times \partial B(0,1)) = \emptyset$.
Then, the connected component of $\overline{K_f \setminus (I \times \{0\})}$ containing bifurcation points $(\lambda, 0)$ is inside $I \times B(0,1)$ and does not intersect the subspaces $\lambda = -1,1$.
This is, there is {\it local} but not {\it global} bifurcation.

The function $g_1$ has a set of non-trivial critical points in $X$ of the form $\partial B(0,R)$ with $R \in (1,2)$.
Actually the connected component of $K_f$ containing $\{1\}\times \partial B(0,R)$ lies outside $I \times B(0,1)$ but its image by $g$ crosses the value $0$.
\end{example}

This example shows that it is not possible to prove {\it global} bifurcation of {\it critical points} of functions having non-vanishing spectral flow.

The proofs of Theorems \ref{mainthm} and \ref{mainthm2} require several lemmas.
We start with some results concerning the level sets and the \ps{I}{c} condition.

\section{Deformation Theorems}
\label{defolemmas}

We present here some technical lemmas which will play a fundamental role in the proof of Theorems \ref{mainthm} and \ref{mainthm2}. The next lemma is a sort of characterization of the \ps{I}{c} property. In this section, $I$ denotes a real compact interval and the function $f$ is not required to verify the assumption \eqref{zero-image}.

\begin{lemma}
\label{lemma_boundgradient}
Let $f: \R \times X \to \R$ be a $C^1$ function satisfying the \ps{I}{a} condition for a given $a\in \R$ which is regular value for every $f_\lambda$, with $\l\in I$.
Then, there exist $\varepsilon, \delta > 0$ such that
$\|f_\lambda'(x)\| \geq \varepsilon$ for every $(\lambda, x) \in f_{I}^{-1}((a - \delta, a + \delta) )$.
\end{lemma}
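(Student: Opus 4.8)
The plan is to argue by contradiction, exploiting compactness supplied by the $(PS)_{I,a}$ condition. Suppose the conclusion fails. Then for every $n \in \N$ there is a pair $(\lambda_n, x_n) \in f_I^{-1}\bigl((a - \tfrac1n, a + \tfrac1n)\bigr)$ with $\|f'_{\lambda_n}(x_n)\| < \tfrac1n$. In particular $f_{\lambda_n}(x_n) \to a$ and $f'_{\lambda_n}(x_n) \to 0$ in $X^*$. Since $I$ is a compact interval, after passing to a subsequence we may assume $\lambda_n \to \lambda_0 \in I$.

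Next I would invoke the $(PS)_{I,a}$ condition directly: it yields a further subsequence, still denoted $(x_n)$, converging to some $x_0 \in X$. By continuity of $f$ we get $f_{\lambda_0}(x_0) = \lim f_{\lambda_n}(x_n) = a$, so $x_0$ lies in the level set $f_{\lambda_0}^{-1}(a)$. The only slightly delicate point is passing to the limit in the derivative: since $f \in C^1(\R \times X, \R)$, the map $(\lambda, x) \mapsto f'_\lambda(x) = \partial_x f(\lambda, x) \in X^*$ is continuous (the partial Fréchet derivative in $x$ is continuous jointly in $(\lambda,x)$ because $f$ is $C^1$). Hence $f'_{\lambda_n}(x_n) \to f'_{\lambda_0}(x_0)$ in $X^*$, and combined with $f'_{\lambda_n}(x_n) \to 0$ this forces $f'_{\lambda_0}(x_0) = 0$. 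Thus $x_0$ is a critical point of $f_{\lambda_0}$ with critical value $a$, contradicting the hypothesis that $a$ is a regular value of $f_{\lambda_0}$.

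This contradiction shows that no such sequence exists, which is exactly the assertion that for some $\varepsilon > 0$ and $\delta > 0$ one has $\|f'_\lambda(x)\| \geq \varepsilon$ whenever $(\lambda, x) \in f_I^{-1}\bigl((a - \delta, a + \delta)\bigr)$: indeed, the negation of this statement is precisely the existence of the sequences constructed above (taking $\varepsilon = \delta = \tfrac1n$ and diagonalizing).

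The only real obstacle is the bookkeeping in translating ``there do not exist $\varepsilon, \delta$'' into a single sequence; this is a standard quantifier juggling, handled by choosing $\varepsilon_n = \delta_n = 1/n$ and extracting a diagonal sequence. Everything else is a routine application of the $(PS)$ compactness together with the continuity of the derivative of a $C^1$ map, so I do not expect any genuine difficulty. Note that we do not need the standing assumption \eqref{zero-image} here, consistently with the remark preceding the lemma.
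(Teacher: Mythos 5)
Your proof is correct and follows essentially the same contradiction argument as the paper: negate the conclusion, extract a sequence with $f_{\lambda_n}(x_n)\to a$ and $f'_{\lambda_n}(x_n)\to 0$, pass to convergent subsequences via compactness of $I$ and the \ps{I}{a} condition, and use joint continuity of $(\lambda,x)\mapsto f'_\lambda(x)$ to produce a critical point of $f_{\lambda_0}$ at level $a$, contradicting regularity. The paper's version is merely terser.
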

\begin{proof}
Assume by contradiction that there exists a sequence $(\lambda_n, x_n) \in I \times X$ such that $\|f_{\lambda_n}'(x_n)\| \to 0$ and $f(\lambda_n,x_n) \to a$.
Taking a convergent sub-sequence $(\lambda_{n_j})$ and a convergent sub-sequence $(x_{n_j})$ given by the \ps{I}{a} condition, 
we deduce that $a$ is a singular value.
\end{proof}

\begin{lemma}
\label{lemma_Kclosed}
Consider a closed set $J \subset \R$ and let $f: \R \times X \to \R$ be a $C^1$ function satisfying the \ps{I}{J} condition.
Then the set 
\[
K = \{(\lambda, c) \in I \times J/ \exists x \in X, f_\lambda(x) = c, f_\lambda'(x) = 0\}
\] is closed.
\end{lemma}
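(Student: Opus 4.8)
The plan is to show that the complement of $K$ in $I\times J$ is relatively open, or equivalently that $K$ is sequentially closed (which suffices here since $I\times J$ is metrizable). So I would take a sequence $(\lambda_n, c_n)\in K$ converging to some $(\lambda_0, c_0)\in I\times J$; note that $I\times J$ is closed in $\R^2$, so the limit does indeed lie in $I\times J$. By definition of $K$, for each $n$ there is $x_n\in X$ with $f_{\lambda_n}(x_n)=c_n$ and $f'_{\lambda_n}(x_n)=0$. The goal is to produce $x_0\in X$ with $f_{\lambda_0}(x_0)=c_0$ and $f'_{\lambda_0}(x_0)=0$, which then places $(\lambda_0,c_0)$ in $K$.

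The key step is to invoke the \ps{I}{J} condition. Since $\lambda_n\in I$ for all $n$, since $f_{\lambda_n}(x_n)=c_n\to c_0$, and since $f'_{\lambda_n}(x_n)=0\to 0$ in $X^*$, the \ps{I}{c_0} condition (which holds because $c_0\in J$ and $f$ satisfies \ps{I}{J}) gives a subsequence $(x_{n_k})$ converging to some $x_0\in X$. Passing to this subsequence, we also have $\lambda_{n_k}\to\lambda_0$ and $c_{n_k}\to c_0$. Now I would use continuity: $f$ is $C^1$ on $\R\times X$, so $(\lambda,x)\mapsto f(\lambda,x)$ is continuous, hence $f_{\lambda_{n_k}}(x_{n_k})=f(\lambda_{n_k},x_{n_k})\to f(\lambda_0,x_0)=f_{\lambda_0}(x_0)$; comparing with $f_{\lambda_{n_k}}(x_{n_k})=c_{n_k}\to c_0$ gives $f_{\lambda_0}(x_0)=c_0$. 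Similarly, the map $(\lambda,x)\mapsto f'_\lambda(x)=\partial_2 f(\lambda,x)\in X^*$ is continuous because $f$ is $C^1$, so $f'_{\lambda_{n_k}}(x_{n_k})\to f'_{\lambda_0}(x_0)$ in $X^*$; since $f'_{\lambda_{n_k}}(x_{n_k})=0$ for all $k$, we conclude $f'_{\lambda_0}(x_0)=0$. Therefore $(\lambda_0,c_0)\in K$, and $K$ is closed.

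The only mild subtlety — and the place I would be careful — is the continuity of the derivative map $(\lambda,x)\mapsto f'_\lambda(x)$ as a map into $X^*$ with the norm topology; this is exactly what $C^1$ regularity of $f$ on the product $\R\times X$ delivers (the partial Fréchet derivative $\partial_2 f$ is continuous), so there is no real obstacle here, just the need to state it cleanly. One should also remember to record at the outset that $I\times J$ is closed in $\R^2$ so that the limit point genuinely lies in the set over which $K$ is defined; otherwise the statement ``$K$ is closed'' could be read in the subspace $I\times J$ only, which would be weaker. With these observations, the argument is a short and direct application of Lemma \ref{lemma_boundgradient}'s underlying idea, namely that the \ps{I}{c} condition converts ``approximate critical points at level near $c$'' into genuine critical points at level $c$.
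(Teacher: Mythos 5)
Your argument is correct and matches the paper's proof essentially verbatim: take a convergent sequence in $K$, pick corresponding critical points $x_n$, extract a convergent subsequence via the $(PS)_{I,c_0}$ condition, and pass to the limit using the continuity of $f$ and $\partial_2 f$. The extra care you take about $I\times J$ being closed in $\R^2$ and about the norm-continuity of $(\lambda,x)\mapsto f'_\lambda(x)$ is sound but does not change the route.
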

\begin{proof}
Take a convergent sequence $(\lambda_n, c_n) \to (\lambda, c) \in I \times J, \ (\lambda_n, c_n) \in K$, and consider  $x_n \in X$, for any $n$, such that $f(\lambda_n, x_n) = c_n$, $f_\lambda'(x_n) = 0$.
By the \ps{I}{c} condition there exists a convergent sub-sequence of $(x_n)$. Thus, the continuity of $f_\lambda'$ shows that $(\lambda, c) \in K$.
\end{proof}

\begin{lemma}
\label{lemma_vectorfield}
Let $f: \R \times X \to \R$ be a $C^1$ function satisfying \ps{I}{\{a,b\}} where $a<b$ are regular values of $f_\lambda$ for $\lambda \in I$.
Denote $U_t = (a-t, a+t) \cup (b-t, b+t)$.
Assume $\partf$ is bounded in $f_{I }^{-1}(U_\delta)$ for some $\delta > 0$.
Then there exists a function $v: \R \times X \to X$ such that
\begin{enumerate}[label={\normalfont \roman*\_}, ref=\roman*]
\item $v$ is bounded and locally Lipschitz.
\item \label{lemma_vectorfield_decre} $\partf(\lambda, x) + f_\lambda'(x)[v(\lambda, x)] < 0$ if $\lambda \in I$ and $f_\lambda(x) = a \hbox{ or } b$.
\end{enumerate}
\end{lemma}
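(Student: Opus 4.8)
The plan is to build $v$ by the classical pseudo-gradient construction, rescaled so that the extra term $\partf$ is dominated along the level sets $\{f_\lambda=a\}$ and $\{f_\lambda=b\}$. First I would fix the relevant constants. Since $f$ satisfies \ps{I}{\{a,b\}} and $a,b$ are regular values of every $f_\lambda$ with $\lambda\in I$, Lemma \ref{lemma_boundgradient}, applied separately to the values $a$ and $b$, yields (after taking the smaller of the two resulting constants) a number $\varepsilon>0$ such that $\|f_\lambda'(x)\|\geq\varepsilon$ whenever $\lambda\in I$ and $f_\lambda(x)\in\{a,b\}$. Let $M>0$ be a bound for $|\partf|$ on $f_I^{-1}(U_\delta)$; since $\{a,b\}\subset U_\delta$, in particular $|\partf(\lambda,x)|\leq M$ whenever $\lambda\in I$ and $f_\lambda(x)\in\{a,b\}$. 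Put $A=\{(\lambda,x)\in I\times X:\ f_\lambda(x)=a\text{ or }b\}$; this is closed in $\R\times X$ (because $f$ is continuous and $I$ is closed) and is precisely the set on which property \ref{lemma_vectorfield_decre} must hold, so if $A=\emptyset$ one may take $v\equiv0$, and I assume $A\neq\emptyset$.

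\emph{Local step.} Fix $p=(\lambda_0,x_0)\in A$. Since $\|f_{\lambda_0}'(x_0)\|\geq\varepsilon>0$, pick a unit vector $w_0\in X$ with $f_{\lambda_0}'(x_0)[w_0]>\tfrac12\|f_{\lambda_0}'(x_0)\|\geq\tfrac\varepsilon2$, set $R:=2(M+1)/\varepsilon$ and $w_p:=-R\,w_0$, so that $\|w_p\|=R$ and $f_{\lambda_0}'(x_0)[w_p]<-(M+1)$, whence
\[
\partf(\lambda_0,x_0)+f_{\lambda_0}'(x_0)[w_p]<M-(M+1)=-1<0 .
\]
As $f\in C^1$, the map $(\lambda,x)\mapsto\partf(\lambda,x)+f_\lambda'(x)[w_p]$ is continuous, so there is an open neighbourhood $V_p\ni p$ in $\R\times X$ on which it remains negative. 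The family $\{V_p\}_{p\in A}$ together with $W:=(\R\times X)\setminus A$ is an open cover of $\R\times X$.

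\emph{Globalisation.} Being a metric space, $\R\times X$ is paracompact and admits a locally Lipschitz partition of unity $\{\psi_\alpha\}_\alpha$ subordinate to this cover; for each $\alpha$ fix a cover element $O_\alpha\supseteq\operatorname{supp}\psi_\alpha$ and let $\mathcal A_1$ consist of those $\alpha$ for which $O_\alpha=V_{p(\alpha)}$ for some $p(\alpha)\in A$. Define
\[
v(\lambda,x)=\sum_{\alpha\in\mathcal A_1}\psi_\alpha(\lambda,x)\,w_{p(\alpha)} .
\]
The sum is locally finite, so on a neighbourhood of any point it is a finite combination of locally Lipschitz scalar functions with constant vector coefficients; hence $v$ is locally Lipschitz, and $\|v(\lambda,x)\|\leq R\sum_{\alpha\in\mathcal A_1}\psi_\alpha(\lambda,x)\leq R$, so $v$ is bounded. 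If $(\lambda,x)\in A$ then $\psi_\alpha(\lambda,x)=0$ for every $\alpha\notin\mathcal A_1$ (their supports lie in $W$, which is disjoint from $A$), so $\sum_{\alpha\in\mathcal A_1}\psi_\alpha(\lambda,x)=1$; moreover $\psi_\alpha(\lambda,x)>0$ forces $(\lambda,x)\in V_{p(\alpha)}$, i.e.\ $f_\lambda'(x)[w_{p(\alpha)}]<-\partf(\lambda,x)$. By linearity of $f_\lambda'(x)$ and since the $\psi_\alpha(\lambda,x)$ sum to $1$,
\[
\partf(\lambda,x)+f_\lambda'(x)[v(\lambda,x)]=\partf(\lambda,x)+\sum_{\alpha\in\mathcal A_1}\psi_\alpha(\lambda,x)\,f_\lambda'(x)[w_{p(\alpha)}]<\partf(\lambda,x)-\partf(\lambda,x)=0 ,
\]
which is property \ref{lemma_vectorfield_decre}.

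\emph{Main obstacle.} The only non-routine ingredient is the existence of a \emph{locally Lipschitz} partition of unity subordinate to an arbitrary open cover of the metric space $\R\times X$, with supports contained in prescribed cover elements; this is classical (Stone's paracompactness theorem, a shrinking of a locally finite refinement, and the $1$-Lipschitz functions $z\mapsto\dist(z,(\R\times X)\setminus V)$), but should be cited with care since $X$ may be infinite dimensional. Everything else is elementary; note in particular that since $f$ is only $C^1$ one cannot expect a $C^1$ field $v$, and local Lipschitz regularity — exactly what is asserted — is all that the deformation flow later built from $v$ will use.
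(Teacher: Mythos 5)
Your proof is correct and follows essentially the same route as the paper's: invoke Lemma \ref{lemma_boundgradient} to get the lower bound on $\|f_\lambda'\|$, construct local descent vectors $w$ near the level set, and glue them with a locally Lipschitz partition of unity, using the bound on $\partf$ to keep $v$ bounded. The only cosmetic difference is bookkeeping: the paper sets $w_{\lambda,x}=0$ outside $f_I^{-1}(U_\delta)$ and uses a cutoff function $\nu$ to encode the resulting auxiliary inequality, whereas you include the open complement of the level set $A$ as an extra element of the cover, which achieves the same effect somewhat more cleanly.
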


The reader can understand the notation $f_{I }^{-1}(U_\delta)$ by the analogous set in the statement of  Theorem \ref{mainthm}.

\begin{proof}
First, by Lemma \ref{lemma_boundgradient} we may assume, taking a smaller $\delta>0$ if necessary, that, for some $\varepsilon>0$, $\|f_\lambda'(x)\| \geq \varepsilon$ for all $(\lambda, x) \in f_{I }^{-1}(U_\delta)$.
Let $\nu:\R \to [0,1]$ be a continuous  function equal to $0$ in $U_{\delta/2}$ and equal to $1$ in $\R \setminus U_{\delta}$.
We shall construct $v(\lambda, x)$ satisfying the  inequality
\begin{equation}
\label{lemma_vectorfield_ineq}
\partf(\lambda, x) + f_\lambda'(x)[v(\lambda, x)] < \nu(f_\lambda(x))\left(|\partf(\lambda, x)| +1 \right), \quad \forall\lambda \in I.
\end{equation}
Thus, condition \eqref{lemma_vectorfield_decre} will follow. To this purpose, fix $(\lambda, x) \in I \times X$ and first assume $f_\lambda(x) \in U_\delta$, so that $\|f_\lambda'(x)\| \geq \varepsilon$.
There exists $w_{\lambda, x}\in X$ such that 

\[
\|w_{\lambda, x}\| \leq \frac 2 \varepsilon \left(|\partf(\lambda, x)| + 1\right)
\]

and 
\[
f_\lambda'(x)[w_{\lambda, x}] \leq -(|\partf(\lambda, x)| + 1).
\]

Hence, we have $\partf(\lambda, x) + f_\lambda'(x)[w_{\lambda, x}] < 0$.
On the other hand, if $f_\lambda(x) \not\in U_\delta$ we define $w_{\lambda, x} = 0$. 
%
By continuity,  every $(\lambda, x) \in I \times X$ has  a neighbourhood $V^{\lambda, x}$ in  $\R \times X$ such that
\begin{equation}
\label{ineq-1}
\partf(\alpha, y) + f_\alpha'(y)[w_{\lambda, x}] <0< \nu(f_\alpha(y))\left(|\partf(\alpha, y)| +1 \right)
\end{equation}

for every $(\alpha, y) \in V^{\lambda, x}$.
Since $I \times X$ is paracompact 
 and $\{V^{\lambda, x} \}_{\lambda, x}$ is an open covering of $I \times X$, we obtain a countable, locally finite refinement $V_i \subseteq V^{\lambda_i, x_i}$ covering $I \times X$, and a locally Lipschitz partition of unity, this is, a collection of (locally Lipschitz) non-negative functions $\eta_i:\R \times X \to \R$ with support in $V_i$ and such that 

\[\sum_{i\in \N} \eta_i (\l,x)= 1  \quad \quad \forall (\l,x)\in I \times X,
\]

where the above sum is locally finite. Recalling \eqref{ineq-1}, we have,  for any $i$ and any $(\alpha, y)\in I \times X$,
\[
\eta_i(\alpha, y) \partf(\alpha, y) + f_\alpha'(y)[\eta_i(\alpha, y) w_{\lambda, x}] \leq \eta_i(\alpha, y) \nu(f_\alpha(y))\left(|\partf(\alpha, y)| +1 \right),
\]
with strict inequality if $\eta_i(\alpha, y) > 0$. Then we obtain
\begin{align*}
\sum_{i \in \N} \eta_i(\alpha, y) \left( \partf(\alpha, y) + f_\alpha'(y)[w_{\lambda, x}] \right) &< \sum_{i \in \N} \eta_i(\alpha, y) \nu(f_\alpha(y))\left(|\partf(\alpha, y)| +1 \right).
\end{align*}

Now, define
\[v(\lambda, x) =\sum_{i \in \N} \eta_i(\lambda, x) w_{\lambda_i, x_i}.
\]

We have
\begin{align*}
\partf(\alpha, y) + f_\alpha'(y)[v(\lambda, x)] &< \nu(f_\alpha(y))\left(|\partf(\alpha, y)| +1 \right).
\end{align*}

Since $\|w_{\lambda, x}\| \leq \frac 2 \varepsilon \left(|\partf(\lambda, x)| + 1\right)$ for every $(\lambda, x) \in I \times X$, we have, for any $(\lambda, x) \in \R \times X$,
\[ \|v(\lambda, x)\| \leq  \sum_{i \in \N} \eta_i(\lambda, x) \|w_{\lambda_i, x_i} \| \leq \frac 2\varepsilon \left( \sup_{(\lambda, x) \in f_{I }^{-1}(U_\delta)} |\partf(\lambda, x)| + 1 \right) \]
and $v$ is bounded. Finally, it is immediate to observe that $v$ is locally Lipschitz and this concludes the proof.
\end{proof}       

The next three theorems are key to the study of deformations of one-parameter families of functions.
Theorems \ref{thm_deformation1} and \ref{thm_deformation2} below generalize analogous results in the textbook \cite{KCC}, where they appear under stronger conditions.

\begin{theorem}[Deformation Theorem]
\label{thm_deformation1}
Let $f: \R \times X \to \R$ be $C^1$ and $a<b$ regular values of $f_\lambda$ for $\lambda \in I$.
Assume that $f$ satisfies \ps{I}{\{a,b\}} and that $\partf$ is bounded in $f_{I }^{-1}(U_\delta)$ for some $\delta > 0$.
Then the pairs $\left(\sls{f_{-1}}{b}, \sls{f_{-1}}{a}\right)$ and $\left(\sls{f_1}{b}, \sls{f_1}{a}\right)$ are homotopically equivalent.
\end{theorem}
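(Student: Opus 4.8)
The idea is to flow along the family $f_\lambda$, using the vector field $v$ constructed in Lemma \ref{lemma_vectorfield}, to transport the pair $\left(\sls{f_{-1}}{b}, \sls{f_{-1}}{a}\right)$ onto $\left(\sls{f_1}{b}, \sls{f_1}{a}\right)$. First I would apply Lemma \ref{lemma_vectorfield} to obtain a bounded, locally Lipschitz vector field $v:\R\times X\to X$ such that $\partf(\lambda,x) + f_\lambda'(x)[v(\lambda,x)] < 0$ whenever $\lambda\in I$ and $f_\lambda(x)\in\{a,b\}$. Since $v$ is bounded and locally Lipschitz, the Cauchy problem
\[
\frac{d}{d\lambda}\gamma(\lambda) = v(\lambda, \gamma(\lambda)), \qquad \gamma(-1) = x,
\]
has, for each $x\in X$, a unique solution defined on all of $[-1,1]$ (global existence follows from the uniform bound on $v$). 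Denote by $\Phi_\lambda:X\to X$ the resulting flow, so $\Phi_{-1} = \mathrm{Id}$ and each $\Phi_\lambda$ is a homeomorphism of $X$ onto itself with continuous inverse $\Phi_\lambda^{-1}$ coming from the backward flow; moreover $(\lambda,x)\mapsto\Phi_\lambda(x)$ is continuous.

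The key computation is to track how $f$ changes along a flow line. For fixed $x$, set $g(\lambda) = f(\lambda,\Phi_\lambda(x))$. Then
\[
g'(\lambda) = \partf(\lambda,\Phi_\lambda(x)) + f_\lambda'(\Phi_\lambda(x))[v(\lambda,\Phi_\lambda(x))].
\]
By property \eqref{lemma_vectorfield_decre} of $v$, whenever $g(\lambda) \in \{a,b\}$ we have $g'(\lambda) < 0$. This is exactly the condition needed to guarantee that the sublevel set $\{x : f_\lambda(x)\le c\}$ is carried forward monotonically for $c=a$ and $c=b$: if $f_{-1}(x)\le b$, then $g$ cannot cross the value $b$ from below (at any moment $g$ hits $b$ it is strictly decreasing), so $f_\lambda(\Phi_\lambda(x))\le b$ for all $\lambda\in[-1,1]$; in particular $\Phi_1\big(\sls{f_{-1}}{b}\big)\subseteq \sls{f_1}{b}$, and the same argument with $a$ gives $\Phi_1\big(\sls{f_{-1}}{a}\big)\subseteq \sls{f_1}{a}$. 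Running the backward flow $\Phi_1^{-1}$ (equivalently, reversing the roles of $-1$ and $1$ and using the same vector field, whose decreasing property at the levels $a,b$ is symmetric in the sense that one obtains the opposite inclusions for the reversed time parametrization) yields $\Phi_1^{-1}\big(\sls{f_1}{b}\big)\subseteq \sls{f_{-1}}{b}$ and $\Phi_1^{-1}\big(\sls{f_1}{a}\big)\subseteq \sls{f_{-1}}{a}$. Hence $\Phi_1$ restricts to a map of pairs $\left(\sls{f_{-1}}{b}, \sls{f_{-1}}{a}\right)\to\left(\sls{f_1}{b}, \sls{f_1}{a}\right)$ whose inverse $\Phi_1^{-1}$ is also a map of pairs; since both are genuine homeomorphisms on the ambient space restricting to these pairs, they are mutually inverse homotopy equivalences of pairs (in fact homeomorphisms of pairs). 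This proves the homotopy equivalence, and the identification more generally works for $\left(\sls{f_\lambda}{b},\sls{f_\lambda}{a}\right)$ at any two parameters in $I$.

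The main obstacle I anticipate is the direction of the monotonicity argument for the backward inclusions: property \eqref{lemma_vectorfield_decre} is stated only at the exact levels $f_\lambda(x) = a$ or $b$, so I must argue carefully that $g$ cannot escape the interval $(-\infty,b]$ (resp.\ $(-\infty,a]$) once it starts there — this is a standard ``barrier'' argument (at a first crossing time $\lambda_0$ with $g(\lambda_0)=b$, continuity gives $g(\lambda)>b$ slightly after, contradicting $g'(\lambda_0)<0$), but it needs to be spelled out, and the symmetric claim requires noting that for the time-reversed flow one is checking that $g$ stays in $[a,\infty)$ or $[b,\infty)$, which follows from the same sign condition read in reverse time. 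A secondary technical point is the global existence and continuous dependence for the flow of a merely locally Lipschitz (not globally Lipschitz) but bounded vector field, which is classical but should be cited or noted.
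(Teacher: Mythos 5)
Your forward direction is fine and matches the paper: from Lemma \ref{lemma_vectorfield} you get a bounded, locally Lipschitz $v$ with $\partf(\lambda,x)+f_\lambda'(x)[v(\lambda,x)]<0$ at levels $a,b$; the induced non-autonomous flow $\Phi_\lambda$ carries $\sls{f_{-1}}{c}$ into $\sls{f_\lambda}{c}$ for $c\in\{a,b\}$ and $\lambda\in[-1,1]$ by the barrier argument. So $F=\Phi_1$ is a map of pairs $\left(\sls{f_{-1}}{b},\sls{f_{-1}}{a}\right)\to\left(\sls{f_1}{b},\sls{f_1}{a}\right)$.

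The gap is in the backward direction. You assert that $\Phi_1^{-1}$ is also a map of pairs, so that $\Phi_1$ is a homeomorphism of pairs. This does not follow from condition \eqref{lemma_vectorfield_decre}. Set $\psi(t)=f(t+1,\phi(t,1,y))$, so $\psi(0)=f_1(y)$ and $\psi(-2)=f_{-1}(\Phi_1^{-1}(y))$. The only information you have is $\psi'(t)<0$ whenever $\psi(t)\in\{a,b\}$. That prevents $\psi$ from \emph{leaving} $(-\infty,b]$ as $t$ \emph{increases}, but it in no way prevents $\psi(-2)>b$ with $\psi$ decreasing through $b$ down to $\psi(0)\le b$. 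In other words, the strict-decrease condition at level $b$ permits trajectories to \emph{enter} $\sls{f}{b}$ from above; running time backward they \emph{exit}. So $\Phi_1^{-1}\big(\sls{f_1}{b}\big)\not\subseteq\sls{f_{-1}}{b}$ in general, and your parenthetical claim that ``the same vector field'' has a symmetric decreasing property in reversed time is false: reversing time flips the sign and turns decrease into increase at those levels, which is the wrong direction for the sublevel sets $\sls{f}{a},\sls{f}{b}$.

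The paper circumvents this by applying Lemma \ref{lemma_vectorfield} a second time, to the time-reversed family $\bar f(\lambda,x)=f(-\lambda,x)$. Since $\partial_1\bar f(\lambda,x)=-\partial_1 f(-\lambda,x)$, the resulting vector field $\bar v$ is genuinely different from $\pm v$; its flow $\bar\phi$ again carries sublevel sets forward in its own time, giving a map of pairs $G=\bar\phi(2,-1,\cdot):\left(\sls{f_1}{b},\sls{f_1}{a}\right)\to\left(\sls{f_{-1}}{b},\sls{f_{-1}}{a}\right)$. Crucially $G$ is \emph{not} $F^{-1}$, so one still has to show $G\circ F\simeq\mathrm{id}$ and $F\circ G\simeq\mathrm{id}$ as maps of pairs; the paper does this with the interpolating homotopies $H_t(x)=\bar\phi(t,1-t,\phi(t,-1,x))$ and $\bar H_t(x)=\phi(t,1-t,\bar\phi(t,-1,x))$, each of which is a map of pairs because at every intermediate stage one only ever flows forward in the relevant family. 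The conclusion is a homotopy equivalence of pairs, not a homeomorphism of pairs, and the extra homotopy step is not optional. To repair your argument you should replace the use of $\Phi_1^{-1}$ with a second flow constructed for $\bar f$ and add the homotopies showing the two compositions are homotopic to the identity as maps of pairs.
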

\begin{proof}
The vector field $v$ constructed in Lemma \ref{lemma_vectorfield} is locally Lipschitz and bounded, so it generates a globally defined flow
\[
\phi: \R \times \R \times X \to X,
\]
which satisfies
\[
\frac{\partial}{\partial t} \phi(t, \lambda, x) = v(t+\lambda, \phi(t, \lambda, x)).
\]
For $(\lambda_0, x_0) \in I \times X$, the curve $x(t) = \phi(t, \lambda_0, x_0)$ is the solution of the initial value problem
\begin{equation}
\left\{
\begin{array}{rl}
x'(t) &= v(\lambda_0 + t,x(t))\\
x(\lambda_0) &= x_0.
\end{array}
\right.
\end{equation}

For any $(\lambda, x) \in \R \times X$ define $\varphi(t) = f_{t+\lambda}(\phi(t, \lambda, x))$, which verifies
\[ \varphi'(t) = \partf(t+\lambda, \phi(t, \lambda, x)) + f_{t+\lambda}'(\phi(t, \lambda, x))[v(t+\lambda, \phi(t, \lambda, x)]. \]

Notice that, by condition \eqref{lemma_vectorfield_decre} of Lemma \ref{lemma_vectorfield}, $\varphi(t) = a$ implies $\varphi'(t) < 0$. So, if $\varphi(0) \leq a$, then we have $\varphi(t) < a$ for every $t>0$.
If $x \in \sls{f_\lambda}{a}$, then, by the definition of $\varphi$ and the previous consideration, $\phi(t, \lambda, x) \in \sls{f_{t+\lambda}}{a}$.
We write this is as
\[\phi\left(t,\lambda,\sls{f_\lambda}{a}\right) \subseteq \sls{f_{t+\lambda}}{a}.\]
Similarly, for $b$ we obtain 
\[
\phi\left(t,\lambda,\sls{f_\lambda}{b}\right) \subseteq \sls{f_{t+\lambda}}{b}.
\]

Appliying the same reasoning to the function $\bar f(\lambda, x) := f(-\lambda, x)$, we obtain a (globally defined) flow $\bar\phi: \R \times \R \times X \to X$ such that
\[\bar\phi\left(t,\lambda,\sls{\bar f_\lambda}{a}\right) \subseteq \sls{\bar f_{t+\lambda}}{a},\ \  \bar\phi\left(t,\lambda,\sls{\bar f_\lambda}{b}\right) \subseteq \sls{\bar f_{t+\lambda}}{b}\]
for every $t>0$.

The functions $F(x) = \phi(2,-1,x), G(x) = \bar\phi(2,-1,x)$ are continuous functions of pairs
\[F:\left(\sls{f_{-1}}{b}, \sls{f_{-1}}{a}\right) \to \left(\sls{f_1}{b}, \sls{f_1}{a}\right)\]
\[G:\left(\sls{f_1}{b}, \sls{f_1}{a}\right) \to \left(\sls{f_{-1}}{b}, \sls{f_{-1}}{a}\right).\]

Define 
\[H_t(x) = \bar\phi(t,1-t,\phi(t,-1,x)),\ \  \bar H_t(x) = \phi(t,1-t,\bar\phi(t,-1,x)).
\]
Then, we verify that $H_t, \bar H_t$ are functions of pairs
\[H_t:\left(\sls{f_{-1}}{b}, \sls{f_{-1}}{a}\right) \to \left(\sls{f_{-1}}{b}, \sls{f_{-1}}{a}\right),\ \ 
\bar H_t:\left(\sls{f_1}{b}, \sls{f_1}{a}\right) \to \left(\sls{f_1}{b}, \sls{f_1}{a}\right)\]
for $t>0$. In addition, $H_0(x) = x, H_2(x) = G(F(x)), \bar H_0(x) = x, \bar H_2(x) = F(G(x))$.
Thus, $F$ and $G$ are homotopy equivalences.
\end{proof}

\begin{theorem}
\label{thm_deformation2}
Let $(r,s)$ be an open bounded interval and let $a,b:I \to (r,s)$ be two continuous functions such that $a(\lambda) < b(\lambda)$ for any $\lambda$, and $f: \R \times X \to \R$ a $C^1$ function satisfying the \ps{I}{[r,s]}-condition.

Assume that $\partf: I \times X \to \R$ is bounded in the set $f_{I }^{-1}(r, s)$.
Assume also that $a(\lambda), b(\lambda)$ are regular values of $f_\lambda$ for every $\lambda \in I$.
Then the pairs $\left(\sls {f_{-1}}{b({-1})}, \sls {f_{-1}}{a({-1})}\right)$ and $\left(\sls {f_1}{b(1)}, \sls {f_1}{a(1)}\right)$
are homotopically equivalent.
\end{theorem}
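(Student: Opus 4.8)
The plan is to deduce Theorem~\ref{thm_deformation2} from Theorem~\ref{thm_deformation1} by replacing the moving barriers $\lambda\mapsto a(\lambda)$ and $\lambda\mapsto b(\lambda)$ by \emph{piecewise constant} ones over a fine enough partition of $I=[-1,1]$ and repairing the mismatches at the break points with the classical deformation lemma for a non-critical interval. The first thing I would establish is a uniform window of regular values: there is $\delta_1>0$ such that for every $\lambda\in I$ every number in $(a(\lambda)-\delta_1,a(\lambda)+\delta_1)\cup(b(\lambda)-\delta_1,b(\lambda)+\delta_1)$ is a regular value of $f_\lambda$, and moreover $r<a(\lambda)-\delta_1$, $a(\lambda)+\delta_1<b(\lambda)-\delta_1$ and $b(\lambda)+\delta_1<s$ for every $\lambda\in I$. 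The first claim is proved by contradiction exactly as in Lemma~\ref{lemma_boundgradient}: a sequence $(\lambda_n,x_n)\in I\times X$ with $\|f_{\lambda_n}'(x_n)\|\to 0$ and $|f_{\lambda_n}(x_n)-a(\lambda_n)|\to 0$ would, after passing to $\lambda_n\to\lambda_*\in I$ and using the continuity of $a$ together with the \ps{I}{[r,s]} condition (note $a(\lambda_*)\in(r,s)\subseteq[r,s]$), produce a critical point of $f_{\lambda_*}$ with critical value $a(\lambda_*)$, a contradiction; the argument for $b$ is identical. The three inequalities hold once $\delta_1$ is small, since $a,b$ are continuous, $I$ is compact and $\min_I(b-a)>0$.

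Next, by uniform continuity of $a$ and $b$ I would choose a partition $-1=\lambda_0<\lambda_1<\dots<\lambda_m=1$ with $\operatorname{osc}_{[\lambda_{j-1},\lambda_j]}a<\delta_1$ and $\operatorname{osc}_{[\lambda_{j-1},\lambda_j]}b<\delta_1$ for each $j$, and set $\alpha_j=a(\lambda_{j-1})$, $\beta_j=b(\lambda_{j-1})$ for $1\le j\le m$, together with $\alpha_{m+1}=a(1)$, $\beta_{m+1}=b(1)$. For each $j$, on the subinterval $[\lambda_{j-1},\lambda_j]$ the constants $\alpha_j<\beta_j$ are regular values of $f_\lambda$ (by the oscillation bound they lie within $\delta_1$ of $a(\lambda)$, resp.\ of $b(\lambda)$, so the first step applies), $f$ satisfies \ps{[\lambda_{j-1},\lambda_j]}{\{\alpha_j,\beta_j\}} (a particular case of \ps{I}{[r,s]}, since $\alpha_j,\beta_j\in(r,s)$), and $\partf$ is bounded on $f_{[\lambda_{j-1},\lambda_j]}^{-1}(U_\delta)$ with $U_\delta=(\alpha_j-\delta,\alpha_j+\delta)\cup(\beta_j-\delta,\beta_j+\delta)$ for $\delta$ small, since that set is contained in $f_I^{-1}(r,s)$. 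Applying Theorem~\ref{thm_deformation1} to the reparametrized map $(\mu,x)\mapsto f\!\left(\lambda_{j-1}+\tfrac{\mu+1}{2}(\lambda_j-\lambda_{j-1}),x\right)$, $\mu\in[-1,1]$ (whose derivative in $\mu$ is a constant multiple of $\partf$, hence still bounded on the relevant set), I obtain
\[
\left(\sls{f_{\lambda_{j-1}}}{\beta_j},\ \sls{f_{\lambda_{j-1}}}{\alpha_j}\right)\ \simeq\ \left(\sls{f_{\lambda_j}}{\beta_j},\ \sls{f_{\lambda_j}}{\alpha_j}\right),\qquad 1\le j\le m.
\]

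Finally I would repair the break points. Fix $j$ and write $\psi=f_{\lambda_j}$. Since $|\alpha_j-\alpha_{j+1}|\le\operatorname{osc}_{[\lambda_{j-1},\lambda_j]}a<\delta_1$ and both numbers are within $\delta_1$ of $a(\lambda_j)$, the closed interval with endpoints $\alpha_j,\alpha_{j+1}$ consists of regular values of $\psi$; likewise for $\beta_j,\beta_{j+1}$; and by the first step this ``$\alpha$-slab'' lies strictly below the ``$\beta$-slab''. Moreover $\psi$ satisfies $(PS)_c$ for every $c\in[r,s]$ (again a particular case of \ps{I}{[r,s]}), so the classical non-critical interval theorem (see \cite{KCC}) provides, for each slab, a strong deformation retraction of $X$ that decreases $\psi$, is supported in a thin neighbourhood of the slab, and is the identity below it. Performing first the $\alpha$-slab deformation — which preserves $\sls{\psi}{\beta_j}$ because it lowers $\psi$, and fixes $\sls{\psi}{\min(\alpha_j,\alpha_{j+1})}$ pointwise — and then the $\beta$-slab deformation — which does not move $\sls{\psi}{\alpha_{j+1}}$, being supported strictly above it — realizes homotopy equivalences of pairs
\[
\left(\sls{f_{\lambda_j}}{\beta_j},\ \sls{f_{\lambda_j}}{\alpha_j}\right)\ \simeq\ \left(\sls{f_{\lambda_j}}{\beta_j},\ \sls{f_{\lambda_j}}{\alpha_{j+1}}\right)\ \simeq\ \left(\sls{f_{\lambda_j}}{\beta_{j+1}},\ \sls{f_{\lambda_j}}{\alpha_{j+1}}\right)
\]
for $1\le j\le m$. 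Concatenating all these equivalences and using $\lambda_0=-1$, $\lambda_m=1$, $\alpha_1=a(-1)$, $\beta_1=b(-1)$, $\alpha_{m+1}=a(1)$, $\beta_{m+1}=b(1)$, together with transitivity of homotopy equivalence of pairs, yields $\left(\sls{f_{-1}}{b(-1)},\sls{f_{-1}}{a(-1)}\right)\simeq\left(\sls{f_1}{b(1)},\sls{f_1}{a(1)}\right)$, as claimed.

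The step I expect to be the main obstacle is the last one: the two barrier levels must be slid one at a time \emph{within} the pair, which is exactly why the disjointness of the $\alpha$- and $\beta$-slabs (guaranteed by the choice of $\delta_1$) is needed, and why the deformations must be arranged so as to fix the sublevel sets lying below their support. An essentially equivalent alternative would be to replace $a,b$ at the outset by $C^1$ functions $\tilde a,\tilde b$ that are $\delta_1$-close to them on $I$, prove moving-barrier versions of Lemma~\ref{lemma_vectorfield} and Theorem~\ref{thm_deformation1} (their proofs carry over verbatim once one requires $\partf(\lambda,x)+f_\lambda'(x)[v(\lambda,x)]<-(|\tilde a'(\lambda)|+|\tilde b'(\lambda)|+1)$ whenever $f_\lambda(x)$ is close to $\tilde a(\lambda)$ or $\tilde b(\lambda)$, and works with $\varphi(t)=f_{t+\lambda}(\phi(t,\lambda,x))-\tilde a(t+\lambda)$ in place of $f_{t+\lambda}(\phi(t,\lambda,x))$), and then correct at $\lambda=\pm1$ with the same non-critical interval argument. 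Continuity of $a,b$ alone does not suffice for the vector field argument, since a continuous barrier may drop arbitrarily steeply, so the smoothing (or the piecewise constant reduction above) is essential.
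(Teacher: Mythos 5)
Your argument is correct, but it is genuinely different from the paper's. You partition $I$ so finely that the barriers $a,b$ move by less than the ``uniform window of regular values'' on each subinterval, apply Theorem~\ref{thm_deformation1} on each piece with the barriers frozen, and then splice the pieces together by the classical non-critical-interval deformation at each break point $\lambda_j$ (this last step is sound: the two slabs are disjoint, the pseudo-gradient flow decreases $\psi=f_{\lambda_j}$, and monotonicity of $\psi$ along the flow is all that is needed for the pair homotopies). The paper instead smooths $a,b$ to $C^1$ functions with the same endpoints (this is possible precisely because the set $K$ of critical pairs is closed, by Lemma~\ref{lemma_Kclosed}, so the graphs stay at positive distance from $K$) and then performs the change of variables
\[
g(\lambda,x)=\frac{f(\lambda,x)-a(\lambda)}{b(\lambda)-a(\lambda)},
\]
which converts the moving barriers into the constant levels $0$ and $1$; one checks that $g$ inherits the \ps{I}{\{0,1\}} condition, the regularity of the levels, and the boundedness of $\partial_1 g$ (here the $C^1$ smoothing is essential, since $a'$ and $b'$ enter $\partial_1 g$), and then applies Theorem~\ref{thm_deformation1} to $g$ in one stroke, with no partition and no repair step. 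Your alternative sketch at the end (smooth $a,b$ and rework Lemma~\ref{lemma_vectorfield} for moving barriers) is closer in spirit to the paper's proof, but the normalization via $g$ is slicker: it lets the constant-barrier machinery apply verbatim. Your piecewise route is more elementary in that it avoids the quotient $g$ entirely, at the cost of a longer concatenation argument and an appeal to the single-functional deformation lemma at each break point.
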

\begin{proof}
By hypothesis, the graphs of $a,b$ in $I \times \R$ do not intersect the set 
\[
K = \{(\lambda, c) \in I \times [r,s]: \; \exists x \in X, f_\lambda(x) = c, f_\lambda'(x) = 0\}.
\]

Since $K$ is closed by Lemma \ref{lemma_Kclosed}, the graphs of the functions $a,b$ can be approximated by $C^1$ functions with the same endpoints $a(\pm 1), b(\pm 1)$. Thus we may assume $a,b$ are $C^1$.
We consider the function
\[g(\lambda, x) = \frac{ f(\lambda, x) - a(\lambda)}{b(\lambda) - a(\lambda)}.\]
Thus, $g$ satisfies 
\[
\sls{f_\lambda}{a(\lambda)} = \sls{g_\lambda}{0},\quad \quad \sls{f_\lambda}{b(\lambda)} = \sls{g_\lambda}{1},
\]
and it is easy to check that $g$ verifies the conditions of Theorem \ref{thm_deformation1} with $a=0, b=1$.
\end{proof}

%
%
\section{Proofs of Theorems \ref{mainthm} and \ref{mainthm2} }
\label{proofmainthm}
First we prove two technical lemmas.

\begin{lemma}
\label{lemma_square}
Let $D \subset \R^2$ be a closed rectangle, let $A_0, A_1$ be two opposite sides of $D$ and $B_0, B_1$ the other opposite ones.
Let $S \subset D$ be a compact set.
If $S$ does not contain a connected component intersecting $B_0$ and $B_1$ then there exists a continuous curve $\gamma:[0,1] \to D$ with $\gamma(0) \in A_0$ and $\gamma(1) \in A_1$ and not intersecting $S$, $B_0$ and $B_1$.
\end{lemma}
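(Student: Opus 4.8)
The statement is a classical topological fact: in a rectangle, a compact set that does not contain a connected component meeting both "vertical" sides $B_0, B_1$ cannot block a continuous path between the "horizontal" sides $A_0, A_1$. The standard tool is the following dichotomy (a consequence of a lemma of Leray–Schauder type, or of the $\check{\mathrm{C}}$ech-cohomology / Eilenberg–Otto characterization of separation in compact spaces): \emph{if $S$ is a compact subset of a compact metric space and no connected component of $S$ meets both $B_0$ and $B_1$, then $B_0$ and $B_1$ lie in different quasi-components of $S \cup B_0 \cup B_1$ — equivalently, there is a separation $S \cup B_0 \cup B_1 = K_0 \sqcup K_1$ into disjoint compact (relatively clopen) sets with $B_i \subseteq K_i$.} I would state this as the one external ingredient and cite it (e.g. the "whyburn lemma" / \cite{KCC} or a point-set topology reference); everything else is elementary.

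Here is the sequence of steps. First, normalize: assume $D = [0,1]\times[0,1]$ with $A_0 = \{0\}\times[0,1]$, $A_1 = \{1\}\times[0,1]$, $B_0 = [0,1]\times\{0\}$, $B_1 = [0,1]\times\{1\}$. Apply the dichotomy to the compact set $S' := S \cup B_0 \cup B_1$ inside $D$: since no connected component of $S$ joins $B_0$ to $B_1$, and $B_0, B_1$ are themselves connected, no connected component of $S'$ joins $B_0$ to $B_1$ either, so we get disjoint compact sets $K_0 \ni B_0$ and $K_1 \ni B_1$ with $K_0 \cup K_1 = S'$. Second, separate them by open sets in $D$: choose disjoint open $V_0 \supseteq K_0$, $V_1 \supseteq K_1$ in $D$ (normality), and set $\delta := \operatorname{dist}(K_0, K_1) > 0$. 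Third, produce the curve. Consider the open $\delta/3$-neighbourhood $W$ of $K_0$ in $D$; its topological boundary $\partial W$ in $D$ is disjoint from $S' = K_0 \cup K_1$ and from $A_0 \cup A_1$ is \emph{not} automatic, so instead I would argue via a grid/combinatorial argument: triangulate $D$ finely (mesh $< \delta/2$) so that no closed simplex meets both $K_0$ and $K_1$, colour each vertex "$0$" if its closed star is disjoint from $K_1$ and "$1$" otherwise; the vertices on $B_0$ get colour $0$, those on $B_1$ get colour $1$, and one runs a Sperner-type / discrete-Jordan-curve argument to extract a polygonal path in the subcomplex of $0$-coloured simplices connecting $A_0$ to $A_1$, staying at distance $\geq$ one mesh-unit from $K_1 \supseteq B_1$ and, by construction, away from $K_0 \supseteq S \cap (\text{that region})$ — more cleanly: the set $\{x \in D : \operatorname{dist}(x,K_0) \le \delta/3\}$ and its complement relative to a fattening of $K_1$ tile $D$, and the common frontier $\Gamma = \{x : \operatorname{dist}(x,K_0) = \delta/3\} \cap D$ is a compact set disjoint from $S'$; since it "surrounds" $B_0$ inside $D$ but avoids $B_1$, a component of $\Gamma$ together with short arcs along $A_0$ and $A_1$ can be spliced into the desired $\gamma$.

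Honestly, the cleanest route — and the one I would actually write — replaces the grid bookkeeping with the following: let $W = \{x \in D : \operatorname{dist}(x, K_0) < \delta/2\}$, an open neighbourhood of $B_0$ with $\overline{W} \cap K_1 = \emptyset$, hence $\overline{W} \cap B_1 = \emptyset$ and $\overline{W}\cap S$ avoids $K_1$; also $\partial W \cap S' = \emptyset$ since $\partial W \subseteq \{x : \operatorname{dist}(x,K_0) = \delta/2\}$ misses $K_0$ and $K_1$. Now $W$ is an open subset of $D$ containing $B_0$ and disjoint from $B_1$; its frontier $\partial_D W$ is a compact set disjoint from $S \cup B_0 \cup B_1$. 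One then shows $\partial_D W$ has a connected component $C$ meeting both $A_0$ and $A_1$: if not, by the same dichotomy $\partial_D W$ would admit a clopen separation isolating its $A_0$-part from its $A_1$-part, which can be pushed to a separation of $D \setminus (\text{a neighbourhood of }B_0)$ contradicting connectedness of $D$ (this is where I expect the real work to be, and it is essentially the Jordan-curve / unicoherence of the square). Finally, $\gamma$ is obtained by walking along $A_0$ from a point to $C \cap A_0$, traversing $C$ (path-connectedness is not available, but $C$ is a continuum in the plane and one can use that it is contained in the level set $\{\operatorname{dist}(\cdot,K_0) = \delta/2\}$, which one can perturb to be a finite union of Lipschitz arcs by Sard/regular-value choice of $\delta/2$), then along $A_1$; by construction $\gamma$ avoids $S$, $B_0$, $B_1$.

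The main obstacle is precisely the last extraction step — turning "$\partial_D W$ separates $B_0$ from $B_1$ inside $D$, hence must stretch from $A_0$ to $A_1$" into an honest continuous curve. I expect to handle it by choosing the separating value (the "$\delta/2$") to be a regular value of the function $x \mapsto \operatorname{dist}(x,K_0)$ after smoothing, or more robustly by the purely combinatorial Sperner/grid argument sketched above, which avoids smoothness entirely and directly outputs a polygonal arc. Everything else — the normalization, the initial dichotomy, the normality separation, the final splicing along the sides $A_0, A_1$ — is routine.
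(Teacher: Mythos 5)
Your opening move matches the paper's: both proofs begin with Whyburn's Lemma to split the obstruction into two disjoint compact pieces $C_0$, $C_1$ attached to $B_0$, $B_1$ respectively. (You apply it to $S\cup B_0\cup B_1$ rather than $S$; this is a harmless and in fact slightly cleaner normalization.) From there the two arguments diverge, and the divergence is where your proposal leaves a genuine gap.

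The paper's second step builds a \emph{smooth} Urysohn function $g:\R^2\to\R$ with $g\equiv 0$ near $C_0\cup B_0$ and $g\equiv 1$ near $C_1\cup B_1$, picks $\alpha\in(0,1)$ a simultaneous regular value of $g$, $g|_{A_0}$, $g|_{A_1}$ (Sard), and considers the compact $1$-manifold with boundary $L=g^{-1}(\alpha)\cap D$, whose boundary lies on $A_0\cup A_1$ and which is disjoint from $S$, $B_0$, $B_1$. The clinching observation is a \emph{parity count}: along each $A_i$ the function $g$ runs from $0$ to $1$, so $L\cap A_i$ has odd cardinality; since a compact $1$-manifold with boundary is a finite union of arcs and circles, oddness forces at least one arc to join $A_0$ to $A_1$. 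That arc is $\gamma$. No appeal to unicoherence, no ``separating sets must contain a connecting continuum,'' no extraction problem.

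Your main route instead looks at the frontier of a $\delta/2$-neighbourhood $W$ of $K_0$ and tries to show this frontier has a connected component meeting both $A_0$ and $A_1$. That is precisely the unicoherence property of the square (``a compact set separating two opposite sides contains a continuum joining the other two''). It is a true theorem, but it is not elementary, and your sketch (``push to a separation of $D$, contradicting connectedness'') does not prove it; you yourself flag this as where the real work is. You then propose to repair it by a Sard/regular-value choice of the level $\delta/2$, which is close to the paper's move, but you deploy it only to regularize the continuum $C$ \emph{after} having postulated its existence by unicoherence, whereas the paper uses regularity to make the parity argument available and thereby dispenses with unicoherence entirely. There is also a technical wrinkle you note but do not resolve: the distance function is only Lipschitz, so Sard does not apply directly; the paper avoids this by building a smooth $g$ from the start rather than smoothing $\dist(\cdot,K_0)$. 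Your Sperner/grid alternative is the honest discrete version of the parity argument and would work if carried out, but as written it is a sketch. In short: you have the right ingredients (Whyburn, Sard, compact $1$-manifold), the gap is the missing parity count on $L\cap A_i$, which is what converts a separation statement into an actual arc without any separation theory.
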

\begin{proof}
Without loss of generality, we can assume that $A_0, A_1$ are vertical sides and $B_0, B_1$ horizontal. Let $S_i = S \cap B_i, i = 0,1$ and assume there is no connected component of $S$ which intersects $S_0$ and $S_1$ at the same time.
We may assume that $S_i \neq \emptyset$ because, otherwise,
a horizontal line close to $B_i$ would be the desired curve.
By Whyburn's Lemma there is a separation $S = C_0 \cup C_1$ where $S_i \subseteq C_i$ and $C_0, C_1$ are two non-empty disjoint compact sets.
By the smooth Urysohn Lemma (\cite[corollary of Theorem 1.11]{WA}) there exists a $C^\infty$ smooth function $g: \R^2 \to \R$ such that $g(x) = i$ for all $x \in C_i, i = 0,1$.

Take $\alpha \in (0,1)$ a regular value of the three functions $g, g|_{A_0}, g|_{A_1}$ and consider $L = g^{-1}(\{\alpha\}) \cap D \setminus B_0 \setminus B_1$ which is a differentiable manifold with boundary, of dimension $1$ with $\partial L = L \cap (A_0 \cup A_1)$ and $L \cap K = \emptyset$.
Since $g(x) = i$ for all $x \in B_i, i=0,1$ and $\alpha$ is a regular value of $g|_{A_i}, i=0,1$, then we have that the cardinality of $L \cap A_i$ is odd, for $i=0,1$. Therefore, there is a curve of $L$ having one endpoint in each of the $A_i$'s.
\end{proof}

\begin{lemma}
\label{lemma_two_curves}
Let $a, b :[-1,1] \to [-1,1]$ be two continuous functions satisfying 

\[
a(-1)=b(-1)=-1, \quad \quad a(1)=b(1)=1.
\]

Then for every $\varepsilon > 0$ there exist continuous functions $\tilde a, \tilde b, c, d:[-1,1] \to [-1,1]$ such that
\begin{enumerate}
\item $\tilde a(-1)=\tilde b(-1)=-1$, \quad $\tilde a(1)=\tilde b(1)=1$,
\item $c(-1)=d(-1)=-1$, \quad $c(1)=d(1)=1$,
\item $\|\tilde a - a\|_\infty < \varepsilon, \quad  \|\tilde b - b\|_\infty < \varepsilon$,
\item $\tilde a(c(t)) = \tilde b(d(t))$.
\end{enumerate}
\end{lemma}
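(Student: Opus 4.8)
The plan is to reduce the lemma to the classification of compact one-dimensional manifolds with boundary.

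\emph{Step 1 (putting $a,b$ in good position).} First I would replace $a$ and $b$ by \emph{smooth} functions $\tilde a,\tilde b:[-1,1]\to[-1,1]$ that are $\varepsilon$-close to $a,b$ in the sup norm, with $\tilde a(-1)=\tilde b(-1)=-1$, $\tilde a(1)=\tilde b(1)=1$, and enjoying: (a) $\tilde a$ attains the value $-1$ only at $-1$ and the value $1$ only at $1$, and likewise for $\tilde b$; (b) $\tilde a'(\pm1)>0$ and $\tilde b'(\pm1)>0$; (c) the (finitely many) critical values of $\tilde a|_{(-1,1)}$ are disjoint from those of $\tilde b|_{(-1,1)}$. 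Properties (a) and (b) come from a convex-combination trick: if $p$ is a polynomial approximation of $a$ with $p(\pm1)=\pm1$ and $p([-1,1])\subseteq[-1,1]$ (e.g. a Bernstein polynomial), then for small $\rho>0$ the polynomial $\tilde a(t):=(1-\rho)p(t)+\rho t$ still maps $[-1,1]$ into $[-1,1]$, satisfies $1-\tilde a(t)\ge\rho(1-t)>0$ and $\tilde a(t)+1\ge\rho(1+t)>0$ for $-1<t<1$, and has $\tilde a'(\pm1)\ge\rho>0$; same for $\tilde b$. Property (c) is then obtained by a further arbitrarily small perturbation inside the same class, by a standard Sard/transversality argument. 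All of this is routine and yields conditions (1) and (3) of the statement.

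\emph{Step 2 (the coincidence set).} Let $F:[-1,1]^2\to\R$ be $F(s,u)=\tilde a(s)-\tilde b(u)$ and $W:=F^{-1}(0)$. Since $dF(s,u)=(\tilde a'(s),-\tilde b'(u))$ vanishes only when $\tilde a'(s)=0$ and $\tilde b'(u)=0$ at once, property (c) forces $dF\neq0$ on $W\cap(-1,1)^2$. On the edge $s=1$ we have $F(1,u)=1-\tilde b(u)$, which by (a) vanishes only at $u=1$, and similarly for the other three edges, so $W$ meets $\partial([-1,1]^2)$ only at the corners $(-1,-1)$ and $(1,1)$ (note $F(1,-1)=2$, $F(-1,1)=-2$, so $W$ misses the other two corners). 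Finally, at each of $(\pm1,\pm1)$ one has $dF\neq0$ by (b), and, since $\tilde a'(\pm1)$ and $\tilde b'(\pm1)$ are positive, the implicit-function-theorem curve $F^{-1}(0)$ through that corner has tangent of positive slope and thus enters the square, making the corner a genuine boundary point. Hence $W$ is a compact one-dimensional manifold with $\partial W=\{(-1,-1),(1,1)\}$.

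\emph{Step 3 (extracting $c$ and $d$).} A compact one-dimensional manifold with boundary is a finite disjoint union of arcs and circles, and its number of boundary points equals twice the number of arc components; since $|\partial W|=2$, there is exactly one arc component $A$, joining $(-1,-1)$ to $(1,1)$. Taking a homeomorphism $\gamma=(c,d):[-1,1]\to A$ with $\gamma(-1)=(-1,-1)$ and $\gamma(1)=(1,1)$, the maps $c,d:[-1,1]\to[-1,1]$ are continuous, satisfy $c(-1)=d(-1)=-1$ and $c(1)=d(1)=1$ (condition (2)), and $\gamma(t)\in W$ gives $\tilde a(c(t))=\tilde b(d(t))$ (condition (4)). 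This proves the lemma. The topological core in Steps 2--3 is short; the main obstacle is the bookkeeping in Step 1, namely perturbing $a,b$ so that $\tilde a,\tilde b$ are \emph{simultaneously} $\varepsilon$-close to them, pinned at $\pm1$, valued in $[-1,1]$, and in general position for $F$ — the convex-combination trick with the identity is what makes all four constraints compatible.
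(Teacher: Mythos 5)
Your proof follows essentially the same route as the paper's: smooth $\tilde a,\tilde b$ with positive derivatives at the endpoints, disjoint critical values, and interior image in $(-1,1)$, then analyze the zero set of $(s,u)\mapsto\tilde a(s)-\tilde b(u)$ on the square as a compact $1$-manifold with boundary exactly at the corners $(-1,-1)$ and $(1,1)$, and parametrize the resulting arc as $(c,d)$. The only place you go beyond the paper is in spelling out the smoothing step with the Bernstein/convex-combination device; the paper simply asserts the existence of such $\tilde a,\tilde b$.
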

\begin{proof}
We approximate $a,b$ by smooth functions $\tilde a, \tilde b :[-1,1] \to [-1,1]$ satisfying conditions $1,3$ above and
\begin{enumerate}[label=\roman*, ref=\roman*]
\item \label{lemma_two_curves_i} $\tilde a'(-1), \tilde b'(-1), \tilde a'(1), \tilde b'(1) > 0$,
\item \label{lemma_two_curves_ii} the critical values of $\tilde a$ and $\tilde b$ are disjoint,
\item \label{lemma_two_curves_iii} $\tilde a(x), \tilde b(x) \in (-1,1)$ for all $x \in (-1,1)$.
\end{enumerate}

Let $D = [-1,1] \times [-1,1]$ and $\phi:D  \to \R$, defined as  $\phi(x,y) = \tilde a(x) - \tilde b(y)$.
Condition \eqref{lemma_two_curves_ii} guarantees that $0$ is a regular value of $\phi|_{D^0}$.
Let $L = \phi^{-1}(0)$. Since condition \eqref{lemma_two_curves_iii} implies 
\[
\begin{array}{lll}
\phi(1,t)>0, & &  \phi(t,-1) >0,\\
\phi(-1,t)<0, & & \phi(t,1) < 0
\end{array}
\]

for all $t \in (-1,1)$, we have $L \cap \partial D= \{(-1,-1), (1,1)\}$.
Also, by condition \eqref{lemma_two_curves_i} , $L$ can be parametrized near $(-1,-1)$ and $(1,1)$ with curves entering $D^0$.
Thus $L$ is a differentiable manifold of dimension $1$ with boundary $\{(-1,-1), (1,1)\}$ which must connect these two points.
Parametrize the curve inside $L$ connecting $(-1,-1)$ and $(1,1)$ by $(c(t), d(t))$.
Then $c, d$ satisfy 

\[
0 = \phi(c(t), d(t)) = \tilde a(c(t)) - \tilde b(d(t)).
\]
\end{proof}

\begin{figure}
\includegraphics{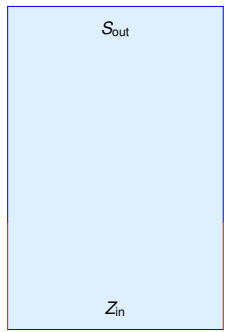}
\caption{The red sides correspond to the vertical sides of the square and the blue sides, to the horizontal ones.}
\end{figure}

\medskip
\medskip
\medskip

\begin{potwr}{mainthm}
Let $\varepsilon > 0$ be small enough so that the two disks $B_{-1} = B((-1,0), \varepsilon)$ and $B_1 = B((1,0), \varepsilon)$ contain only trivial critical pairs. 

Let $G_f$ be the connected component of $E_f \cup \Zin$ containing $\Zin$.
Assume by contradiction that none of the alternatives \eqref{mainthm_unbounded}, \eqref{mainthm_intersects} are satisfied for $G_f$, then there is $R>0$ such that $G_f \subseteq [-1,1] \times [-R,R]$.
The set $D_+ = [-1,1] \times [0,R]$ is homeomorphic to a closed rectangle, where the vertical sides correspond to $\{\pm 1\} \times [0,\varepsilon]$ and the horizontal lines to the rest of $\partial D_+$, this is
$\Zin$ and the three segments $\Sout = ( \{-1,1\} \times [\varepsilon,R] ) \cup ( [-1,1] \times \{R\} )$.
The reader can see the above picture.
If condition \eqref{mainthm_intersects} fails then there is no connected set inside $E_f$ that intersects at the same time both horizontal sides of the rectangle.
Therefore, applying Lemma \ref{lemma_square} which is invariant by homeomorphisms, as it is easy to see, we conclude that there exists a continuous curve 
\[
\widehat u_+:[-1,1] \to \overline D_+ \setminus E_f, 
\]
such that 
\[
\widehat u_+(-1) = (-1,\varepsilon/2), \quad \widehat u_+(1) = (1,\varepsilon/2).
\]

Similarly, define $D_-$ in the lower semiplane and a map 
\[
u_-:[-1,1] \to D_- \setminus E_f,
\]
such that 
\[
u_-(-1) = (-1,-\varepsilon/2), \quad u_-(1) = (1,-\varepsilon/2),
\]       
and having analogous properties to $u_{+}.$
Denote
\[
u_{+}(t)=(\l_{+}(t),y_{+}(t)), 
u_{-}(t)=(\l_{-}(t),y_{-}(t)). 
\]

By Lemma \ref{lemma_two_curves} with 
$\varepsilon < \dist(S_f, \Im(u_\pm))$ and 
$a,b$ replaced by $\lambda_- , \lambda_+$, we obtain functions $\tilde \lambda_-, \tilde \lambda_+, c, d:[-1,1] \to [-1,1]$ such that
\begin{enumerate}
\item $\tilde u_\pm(t): = (\tilde \lambda_\pm(t), y_\pm(t) ) \in \R^2 \setminus S_f$ for all $t \in [-1,1]$,
\item $\tilde\lambda_-(c(t)) = \tilde\lambda_+(d(t))$.
\end{enumerate}
Now define 
\begin{align*}
\lambda(t) & = \tilde\lambda_-(c(t)) = \tilde\lambda_+(d(t)),\\
a(t) & = y_-(c(t)),\\
b(t) & = y_+(d(t)).
\end{align*}

By the properties of the curves $\tilde u_\pm$ we know that $a(t)$ and $b(t)$ are regular values of $f_{\lambda(t)}:X\to \R$, for any $t\in [0,1]$.
Applying Theorem \ref{thm_deformation2} to the family of maps $f_{\lambda(t)}$, we get the homotopy equivalence of pairs
\[\left(\sls{f_{-1}}{b({-1})}, \sls{f_{-1}}{a({-1})}\right) \cong \left(\sls{f_1}{b(1)}, \sls{f_1}{a(1)}\right).\]
In view of condition \eqref{mainthm_nonbif}, we have
\[\left(\sls{f_{-1}}{\delta}, \sls{f_{-1}}{-\delta}\right) \cong \left(\sls{f_1}{\delta}, \sls{f_1}{-\delta}\right)\]
for any $\delta \in (0, \varepsilon/2)$, which contradicts hypothesis \eqref{mainthm_specflow}.

\end{potwr}

\medskip

\begin{potwr}{mainthm2}
We must prove that conditions \ref{mainthm2_ps} through \ref{mainthm2_specflow} imply the conditions \ref{mainthm_ps} through \ref{mainthm2_specflow}.

The facts that $f$ is $C^2$ and that $0 \in H$ is a non-degenerate critical point (condition \ref{mainthm2_nondeg}) imply \ref{mainthm_nonbif}.

It remains to show that conditions \ref{mainthm2_nondeg} and \ref{mainthm2_specflow} imply condition \ref{mainthm_specflow}. 
Assume the pairs of spaces $\left(\sls{f_{-1}}{\delta}, \sls{f_{-1}}{-\delta}\right)$ and $\left(\sls{f_1}{\delta}, \sls{f_1}{-\delta}\right)$ are homotopically equivalent for $\delta>0$ arbitrarily small.
Then by Theorem \ref{mainprelimthm} and the fact that $0 \in H$ is the only critical point with value $0$ (condition \ref{mainthm2_nondeg}) we can compute the critical groups for every $k \in \N$ as
\[C_k(f_i, 0) \cong H_k(\sls{f_i}{\varepsilon/2}, \sls{f_i}{-\varepsilon/2})\] for $i = -1,1$.

The remainder of the proof is standard.
Condition \eqref{mainthm2_specflow} permits to reduce the computation of the critical groups to finite dimensional spaces, and the condition
\[
\mu(L_{-1}) \neq \mu(L_1)
\]
implies that $C_k(f_{-1}, 0)$ is not isomorphic to $C_k(f_{1}, 0)$ for some $k$, which is a contradiction.
\end{potwr}

\section{Strongly Indefinite Functions}
\label{stronglyind}

In this section we will prove a global bifurcation result for a class of nonlinear functionals verifying analogous conditions of Theorem \ref{mainthm2}, except for the fact that the Hessian operators of the functionals have here infinite dimensional negative eigenspaces and thus the Morse index cannot be defined. Condition \ref{mainthm2_specflow} of Theorem \ref{mainthm2} will be replaced in Theorem \ref{mainthm3} by a more general condition involving the spectral flow of the Hessian operators. On the other hand, Thereom \ref{mainthm3} cannot be strictly considered as an extension of Theorem \ref{mainthm2} because it requires a special compactness assumption, as we will see below.



\begin{theorem} 
\label{mainthm3}
Let $f : \R \times H \to \R$ be a $C^2$ function such that
\[
f(\lambda,0) = 0\quad \textrm{and} \quad \nabla f_\lambda( 0) = 0, \quad \quad \forall \lambda \in \R.
\] 
Suppose that, for every $(\lambda,x) \in \R \times H$, one has $\nabla f(\lambda,x) = \J(x) - K(\lambda, x)$
where $\J$ a symmetry of $H$ and the range of $K:\R \times H \to H$ is contained in a compact set. Denote by $I$ the interval $ [-1,1]$ and assume that the following assumptions hold: 
\begin{enumerate}
\item \label{mainthm3_ps} $f$ satisfies the \ps{\R}{\R} condition.
\item \label{mainthm3_partf_bounded} $\partf$ is bounded in the sets of the form $f_{[-N, N]}^{-1}([-N, N])$ for every $N \in \N$.
\item \label{mainthm3_nondeg} For $i = -1,1$, $0 \in H$ is a non-degenerate critical point and the only critical point of $f_i$ with value $0$. 
\item \label{mainthm3_specflow} Assume that, for every $\lambda\in I$, the Hessian of $f$ at zero, 
\[
L_\lambda:= D_X \nabla f(\l , 0)
\]
is Fredholm for every $\l\in [-1,1]$ and suppose
\[
\spf(L,I)\neq 0.
\]
\end{enumerate}
Then the conclusion of Theorem \ref{mainthm} holds.
\end{theorem}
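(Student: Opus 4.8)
\medskip
\noindent\emph{Proof strategy for Theorem~\ref{mainthm3}.}
The plan is to deduce the conclusion by a Galerkin approximation that reduces the problem to a family of \emph{finite-dimensional} problems, to each of which Theorem~\ref{mainthm2} applies, and then to pass to the limit using the compactness of $K$. The point of departure is the observation that, since $\nabla f(\lambda,x)=\J x-K(\lambda,x)$ with $\overline{K(\R\times H)}$ compact, every critical point of every $f_\lambda$ satisfies $x=\J K(\lambda,x)$ and hence lies in a fixed compact set; this is the structural fact behind the ``unremovable'' compactness hypothesis and it is what makes the limiting step feasible.

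Fix the orthogonal splitting $H=H_+\oplus H_-$ associated with the symmetry $\J$ and the $\J$-invariant Galerkin subspaces $H_n$ used in the construction of the generalized signature $\sign_\J$. For each $n$ let $f_n:\R\times H_n\to\R$ be the restriction of $f$, so that $\nabla f_n(\lambda,\xi)=\J\xi-P_nK(\lambda,\xi)$ for $\xi\in H_n$. One checks that $f_n$ satisfies, on the finite-dimensional Hilbert space $H_n$, the hypotheses of Theorem~\ref{mainthm2}: the \ps{\R}{\R} condition and the boundedness of $\partial f_n/\partial\lambda$ are inherited from \ref{mainthm3_ps}, \ref{mainthm3_partf_bounded} and the relative compactness of $K$; the Hessian of $f_{n,i}$ at $0$ is the compression $L_{i,n}=P_nL_i|_{H_n}$, which by \cite[Lemma 1.1]{FPR99} is invertible for all large $n$, so that $0$ is a non-degenerate critical point of $f_{n,i}$ for $i=\pm1$; and the signature formula \eqref{sf-formula-1}, combined with $\dim H_n=2n$ and non-degeneracy (whence $\mu(-L_{i,n})=2n-\mu(L_{i,n})$), gives
\[
\mu(L_{-1,n})-\mu(L_{1,n})=\spf(L,I)\neq 0
\]
for all large $n$. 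Hence Theorem~\ref{mainthm2} yields, for every such $n$, a connected set $\Gamma_n\subseteq E_{f_n}\subseteq\R^2$ meeting $\Zin$ that is either unbounded in $\R^2$ or meets $\{-1,1\}\times H_n$.

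The final step is the passage to the limit. If $(\lambda_m,\xi_m)$ with $\xi_m\in H_{n_m}$, $n_m\to\infty$, is a sequence of critical points of the $f_{n_m}$, then $\J\xi_m=P_{n_m}K(\lambda_m,\xi_m)$, so $\nabla f(\lambda_m,\xi_m)=-(I-P_{n_m})K(\lambda_m,\xi_m)$; since $K(\lambda_m,\xi_m)$ ranges in a compact set and $P_{n_m}\to I$, this tends to $0$, and along a subsequence $\xi_m$ converges to a critical point of the corresponding $f_\lambda$, with the values converging as well. Thus every accumulation point of $\bigcup_n\Gamma_n$ belongs to $S_f$, and using the connectedness of the $\Gamma_n$ together with a Whyburn-type accumulation argument (of the kind already used in Section~\ref{proofmainthm}) one extracts a connected subset of $E_f$ meeting $\Zin$ which is unbounded in $\R^2$ or meets $\{-1,1\}\times H$ — the conclusion of Theorem~\ref{mainthm}.

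I expect the main obstacle to be precisely this last step: guaranteeing that the finite-dimensional branches $\Gamma_n$ accumulate on a \emph{single connected} branch for $f$ rather than dissolving, and that the dichotomy (unbounded, or reaching $\lambda=\pm1$) is preserved in the limit; this is where the relative compactness of $K$ has to be used quantitatively and with care. Two subsidiary points also require attention. First, to apply Theorem~\ref{mainthm2} to $f_{n,i}$ one needs $0$ to be the only critical point of $f_{n,i}$ with value $0$ (equivalently, $0$ isolated as a critical value of $f_{n,i}$), which does not follow from non-degeneracy alone; it should be arranged for large $n$ — or otherwise circumvented — by a genericity argument on the Galerkin subspaces or a small perturbation exploiting the stability of critical groups recalled in Section~\ref{preliminaries}. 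Second, the identity $\mu(L_{-1,n})-\mu(L_{1,n})=\spf(L,I)$ uses that $L_{i,n}$ arises exactly as in the $\sign_\J$ construction; since $L_\lambda$ is only assumed Fredholm and need not be a compact perturbation of $\J$, one may first need to conjugate the path $L_\lambda$ by a cogredient parametrix $M_\lambda$ to the normal form $T+K_\lambda$ — as $M_\lambda$ is a path of automorphisms fixing $0$ this affects neither the critical pairs nor the bifurcation statement, but it must be threaded through the Galerkin scheme consistently.
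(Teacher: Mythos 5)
Your reduction to the Galerkin approximations $f_n=f|_{\R\times H_n}$, the verification that $f_n$ inherits $(PS)_{\R,\R}$ from the compactness of $K$, the identity $\mu(L_{-1,n})-\mu(L_{1,n})=\spf(L,I)$ for large $n$ (from \eqref{sf-formula-1} together with $\sign L_{i,n}=2n-2\mu(L_{i,n})$ once $L_{i,n}$ is invertible by \cite[Lemma~1.1]{FPR99}), and the appeal to Theorem~\ref{mainthm2} to produce connected sets $C_n\subset S_{f_n}$ all coincide with the paper's argument. The gap is exactly where you flag it, but your expectation of what fills it is wrong: the paper never assembles the $C_n$ into a connected limit branch. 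Instead it re-runs the contradiction scheme of Theorem~\ref{mainthm}. Supposing neither alternative holds for $f$, the component $G_f$ of $E_f\cup\Zin$ lies in a bounded rectangle and, by Whyburn's Lemma, is enclosed in an open set $A$ whose boundary misses $E_f\cup\Sout$. Each $C_n$ (for $n$ large) is connected, meets $\Zin\subset A$, and is unbounded or reaches $\Sout$, hence must hit $\partial A$ at some $z_n$, and $z_n\notin B_{\pm1}$ by Lemma~\ref{approx_eps}. Compactness of $\partial A$ extracts $z_n\to z\in\partial A$, and the relative compactness of the range of $K$ forces $z\in S_f$ (Lemma~\ref{approx_limit}: from $\J x_n=P_n K(t_n,x_n)$ and $P_n\to I$ on compacts, $x_n$ converges to a critical point of $f_{\lim t_n}$). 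As $z\notin\Zin\cup\Sout$ we get $z\in E_f\cap\partial A$, contradicting the choice of $A$. Passing to the limit is thus a statement about a single point, not a branch, which is what makes the argument close.

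Two of your subsidiary concerns deserve comment. The requirement that $0$ be the only critical point of $f_{n,\pm1}$ at level $0$ is handled not by genericity but by the same compactness-plus-nondegeneracy argument that proves Lemma~\ref{approx_eps}: if $x_n\in H_n\setminus\{0\}$ were critical for $f_{n,\pm1}$ with value tending to $0$, then $x_n=\J P_n K(\pm1,x_n)$ converges along a subsequence to a critical point $x^*$ of $f_{\pm1}$ of value $0$, so $x^*=0$ by hypothesis~\ref{mainthm3_nondeg}, and Taylor expansion at $0$ together with the invertibility of $L_{\pm1}$ and the smallness of $(I-P_n)D_xK(\pm1,0)$ forces $x_n=o(\|x_n\|)$, a contradiction. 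On the other hand, your worry about a cogredient parametrix is superfluous in this setting: $\nabla f(\lambda,x)=\J x-K(\lambda,x)$ with $K$ of relatively compact range gives $L_\lambda=\J-D_xK(\lambda,0)$, and $D_xK(\lambda,0)$ is compact (the Fr\'echet derivative of a compact map is compact), so the path $\lambda\mapsto L_\lambda$ is already in the normal form $\J+\text{compact}$ for which the $\sign_\J$-based definition of the spectral flow applies directly; no conjugation is needed.
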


Let $H_n$ be as above and define $f_n$ as the restriction of $f$ to $H_n$. Then clearly we have
\begin{align*}
\nabla f(t,x) &= \J(x) - K(t,x)\\
\nabla f_n(t,x) &= \J(x) - K_n(t,x)\\
d \nabla f_n(t,x) &= \J(x) - K_n'(t,x)
\end{align*}
where $K_n(t,x) = P_n K(t,x)$.

We need three technical lemmas:
\begin{lemma}
\label{approx_eps}
There are $\varepsilon>0$ and $m_0 \in \N$ such that for $m \geq m_0$, the balls $B((\pm 1,0),\varepsilon) \subseteq \R^2$ contain only trivial solutions of $f_m$.
\end{lemma}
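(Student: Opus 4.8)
\textbf{Proof plan for Lemma \ref{approx_eps}.}

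The plan is to argue by contradiction, exploiting the non-degeneracy hypothesis \ref{mainthm3_nondeg} together with the strong compactness assumption $\nabla f(\lambda,x) = \J(x) - K(\lambda,x)$ with $K$ having relatively compact range. Suppose the conclusion fails; then for a sequence $m_k \to \infty$ and $\varepsilon_k \to 0$ there exist points $(\lambda_{m_k}, x_{m_k}) \in B((\pm 1, 0), \varepsilon_k)$ with $x_{m_k} \in H_{m_k}$, $x_{m_k} \neq 0$ and $\nabla f_{m_k}(\lambda_{m_k}, x_{m_k}) = 0$. Passing to a subsequence we may assume the sign (say $+1$) is fixed, so $\lambda_{m_k} \to 1$ and $x_{m_k} \to 0$. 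The finite-dimensional criticality equation reads $P_{m_k}\bigl(\J(x_{m_k}) - K(\lambda_{m_k}, x_{m_k})\bigr) = 0$, i.e. $\J(x_{m_k}) = K(\lambda_{m_k}, x_{m_k}) - (I - P_{m_k})K(\lambda_{m_k}, x_{m_k})$ after applying $\J$ (which preserves $H_n$). The key point is that $(I-P_{m_k})K(\lambda_{m_k}, x_{m_k}) \to 0$: this is exactly where the compact-range hypothesis on $K$ is used, since $P_n \to I$ uniformly on compact sets.

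First I would set up this reduction carefully and observe that, since $x_{m_k} \to 0$, we have $K(\lambda_{m_k}, x_{m_k}) \to K(1,0) = \J(0) = 0$ by continuity of $K$ and the identity $\nabla f_1(0) = 0$. Next, the heart of the matter is a quantitative, invertibility-type lower bound near the non-degenerate critical point: because $0$ is a non-degenerate critical point of $f_1$, the Hessian $L_1 = \J - K'(1,0)$ is an isomorphism of $H$, hence there is $\rho > 0$ with $\|L_1 h\| \geq \rho \|h\|$ for all $h$. By a Taylor/mean-value estimate for the $C^1$ map $x \mapsto \nabla f(\lambda, x)$, uniformly for $\lambda$ near $1$ and $x$ near $0$, one gets
\[
\|\nabla f(\lambda, x)\| \geq \frac{\rho}{2}\,\|x\|
\]
for $|\lambda - 1|$ and $\|x\|$ sufficiently small. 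The remaining task is to transfer this lower bound from $\nabla f$ to $\nabla f_{m_k}$: writing $\nabla f_{m_k}(\lambda, x) = P_{m_k}\nabla f(\lambda, x)$ for $x \in H_{m_k}$, we have $\|\nabla f_{m_k}(\lambda, x)\| \geq \|\nabla f(\lambda, x)\| - \|(I - P_{m_k})\nabla f(\lambda, x)\|$, and since $\nabla f(\lambda, x) = \J x - K(\lambda, x)$ with $\J x = \J P_{m_k} x = P_{m_k}\J x \in H_{m_k}$, we get $(I-P_{m_k})\nabla f(\lambda, x) = -(I - P_{m_k})K(\lambda, x)$, which tends to $0$ uniformly on a neighbourhood of $(1,0)$ by the compactness of the range of $K$. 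Hence for $k$ large $\|\nabla f_{m_k}(\lambda_{m_k}, x_{m_k})\| \geq \frac{\rho}{4}\|x_{m_k}\| > 0$, contradicting criticality; the same argument applies verbatim at $(-1,0)$.

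I expect the main obstacle to be making the uniformity precise: one must choose the neighbourhood of $(\pm1, 0)$ and the threshold $m_0$ so that \emph{all three} error terms — the deviation of $\|\nabla f(\lambda,\cdot)\|$ from its linearization, the continuity modulus of $K$, and the uniform convergence $\|(I - P_m)K(\lambda,x)\| \to 0$ on that neighbourhood — are controlled simultaneously. The compact-range hypothesis on $K$ is indispensable here (and is precisely the ``special compactness assumption'' flagged before the statement of Theorem \ref{mainthm3}): without it, $\|(I-P_m)K\|$ need not go to zero uniformly and the finite-dimensional critical points could escape to the boundary of the ball. Once the uniform bound is in place, the contradiction is immediate, and the statement follows with $\varepsilon$ any radius below the chosen neighbourhood size and $m_0$ the corresponding index.
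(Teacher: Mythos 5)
Your overall strategy matches the paper's: argue by contradiction, extract convergence using the compact-range hypothesis on $K$, and derive a contradiction with the non-degeneracy of the Hessian at $\pm1$. However, there are two concrete gaps.

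First, you take $x_{m_k}\to 0$ for granted. But the ball $B((\pm1,0),\varepsilon)$ lives in $\R^2$ and constrains only the critical \emph{pair} $(\lambda_{m_k}, f_{m_k}(\lambda_{m_k},x_{m_k}))$, not the critical \emph{point} $x_{m_k}\in H_{m_k}$. One must first show $x_{m_k}$ converges: from $\nabla f_{m_k}(\lambda_{m_k},x_{m_k})=0$ and $\J(H_{m_k})\subseteq H_{m_k}$ one gets $x_{m_k}=\J P_{m_k}K(\lambda_{m_k},x_{m_k})$, and the compact range of $K$ plus $P_n\to Id$ on compacts give a convergent subsequence $x_{m_k}\to x^*$; passing to the limit gives $f(\pm1,x^*)=0$ and $\nabla f(\pm1,x^*)=0$, and only then does condition \ref{mainthm3_nondeg} (that $0$ is the \emph{only} critical point of $f_{\pm1}$ with value $0$) give $x^*=0$. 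This step is in the paper and cannot be skipped.

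Second, and more seriously, the final estimate does not close. You want
$\|\nabla f_{m_k}(\lambda_{m_k},x_{m_k})\| \geq \tfrac{\rho}{2}\|x_{m_k}\| - \|(I-P_{m_k})K(\lambda_{m_k},x_{m_k})\| \geq \tfrac{\rho}{4}\|x_{m_k}\|$, but the only control you invoke on the subtracted term is that $\|(I-P_m)K(\lambda,x)\|\to 0$ \emph{uniformly} in $(\lambda,x)$. Since $\|x_{m_k}\|\to 0$ too, this uniform smallness does not dominate the shrinking quantity $\tfrac{\rho}{4}\|x_{m_k}\|$; plugging $\nabla f_{m_k}=0$ into your inequality merely re-derives $\|x_{m_k}\|\to 0$, not a contradiction. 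What is needed is the sharper, scale-matched bound $\|(I-P_{m_k})K(\lambda_{m_k},x_{m_k})\|=o(\|x_{m_k}\|)$. The paper obtains this by linearizing: since $K(\lambda,0)=0$, write $K(\lambda_{m_k},x_{m_k})=dK(\pm1,0)x_{m_k}+o(\|x_{m_k}\|)$, note that $dK(\pm1,0)$ is a \emph{compact operator} (Fréchet derivative of a compact map), so $\|(I-P_m)\,dK(\pm1,0)\|\to 0$ in operator norm, whence $\|(I-P_{m_k})dK(\pm1,0)x_{m_k}\|=o(1)\|x_{m_k}\|$ and the remainder contributes $o(\|x_{m_k}\|)$. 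With that, $d\nabla f(\pm1,0)x_{m_k}=o(\|x_{m_k}\|)$, contradicting invertibility of $L_{\pm1}$. In short: you must pass from the nonlinear $K$ to its derivative and use compactness of the \emph{operator} $dK(\pm1,0)$, not merely precompactness of the range of $K$, to get an error that is $o(\|x_{m_k}\|)$ rather than $o(1)$.
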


\begin{proof}
Assume otherwise, then we have a sequence $(t_n,x_n) \in \R \times H$ such that 
\begin{align*}
x_n \in H_n\setminus H_{n-1}\\
t_n \to t^*=\pm1\\
f(t_n, x_n) \to 0\\
\nabla f(t_n, x_n) = 0.
\end{align*}

Since $\J(H_n) \subseteq H_n$ and since $P_n \to Id$ uniformly in compact sets, we have a sub-sequence we still call $(x_{n})$ such that 
\[x_n = \J P_n K(t_n, x_n) \to x^*.\]

By the continuity of $f$ and $\nabla f$,
\[f(t^*, x^*) = 0, \nabla f(t^*, x^*) = 0.\]
Since property \eqref{mainthm_nonbif} is valid we have $x^* = 0$. Now since $f$ is $C^1$ we have
\[0 = \nabla f_n(t_n, x_n) = d \nabla f_n(t^*, 0).x_n + o(\|x_n\|)\]
\[0 = d \nabla f(t^*, 0).x_n + (Id - P_n).d K(t^*, 0).x_n + o(\|x_n\|).\]
Since the Frechet derivative of a compact function is a compact operator \cite[Theorem 17.1]{KrZa}, and since $P_n \to Id$ uniformly in compact sets, we have (modulo a sub-sequence)
\[\|(Id - P_n).d K(t^*, 0).x_n\| = o(\|x_n\|) \]
so we conclude $d \nabla f(t^*,0).x_n = o(\|x_n\|)$ which contradicts the invertibility of $d \nabla f(t^*,0)$.

The lemma is thus proved.
\end{proof}

The following lemma verifies easily
\begin{lemma}
\label{approx_ps}
For any $m$, the function $f_m$ satisfies $(PS)_{\R,\R}$.
\end{lemma}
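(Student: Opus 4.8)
The plan is to prove Lemma \ref{approx_ps}, namely that each finite-dimensional restriction $f_m$ satisfies the $(PS)_{\R,\R}$ condition. Recall that this means: for every $N \in \N$ and every $c \in \R$, any sequence $(\lambda_n, x_n) \in [-N,N] \times H_m$ with $f_m(\lambda_n, x_n) = f(\lambda_n, x_n) \to c$ and $\nabla f_m(\lambda_n, x_n) \to 0$ in $H_m$ must have a convergent subsequence $(x_{n_k})$ in $H_m$.

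First I would reduce to showing that such a sequence $(x_n)$ is bounded in $H_m$; once boundedness is established, finite-dimensionality of $H_m$ gives a convergent subsequence immediately (Bolzano--Weierstrass), and the limit lies in $H_m$ since $H_m$ is closed. So the heart of the argument is the boundedness claim. To obtain it, I would invoke the full gradient structure: writing $\nabla f(t,x) = \J(x) - K(t,x)$ with $K$ having range inside a fixed compact set $\mathcal K \subset H$, and noting that $\nabla f_m(t,x) = P_m \nabla f(t,x) = \J(x) - P_m K(t,x)$ (using $\J(H_m) \subseteq H_m$ and $x \in H_m$), we get the identity
\[
x_n = \J\big( \nabla f_m(\lambda_n, x_n) + P_m K(\lambda_n, x_n)\big) = \J \nabla f_m(\lambda_n, x_n) + \J P_m K(\lambda_n, x_n).
\]
The first term tends to $0$ by assumption, and the second is bounded because $\|P_m K(\lambda_n, x_n)\| \le \|K(\lambda_n, x_n)\| \le \sup_{v \in \mathcal K}\|v\| < \infty$ (using $\|P_m\|\le 1$ and $\mathcal K$ bounded). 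Hence $(x_n)$ is bounded in $H$, hence in $H_m$, which is exactly what we need.

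Actually, the compactness argument gives more than boundedness: passing to a subsequence along which $K(\lambda_n, x_n) \to w \in H$ (possible since the $K(\lambda_n, x_n)$ live in the compact set $\mathcal K$), the identity above shows $x_n \to \J w$ directly, and since each $x_n \in H_m$ and $H_m$ is closed, the limit $\J w$ lies in $H_m$. This simultaneously handles all values of $c$ and all $N$, so the $(PS)_{[-N,N],\R}$ condition holds for every $N$, which is precisely $(PS)_{\R,\R}$. I would present this streamlined version rather than the two-stage boundedness-then-extract argument.

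The one point requiring a word of care — and the only potential obstacle — is the legitimacy of the step $P_m \nabla f(t,x) = \nabla f_m(t,x)$, i.e.\ that the gradient of the restriction $f|_{H_m}$ equals the orthogonal projection onto $H_m$ of the ambient gradient. This is the standard fact that for $x \in H_m$ and $h \in H_m$ one has $df(x)[h] = \langle \nabla f(x), h\rangle = \langle P_m \nabla f(x), h\rangle$, so $\nabla (f|_{H_m})(x) = P_m \nabla f(x)$; it is already used in the displayed formulas preceding the lemma statement in the paper, so I would simply cite those. Beyond this, no genuine difficulty arises: the finite-dimensionality of $H_m$ trivializes the compactness extraction that in infinite dimensions would be the crux, and the compact-range hypothesis on $K$ does all the remaining work. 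The proof is therefore short, and its only subtlety is bookkeeping about projections and the uniform bound on $\|K\|$.
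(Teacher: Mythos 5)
Your proof is correct and follows essentially the same route as the paper: the key identity $x_n = \J\nabla f_m(\lambda_n,x_n) + \J P_m K(\lambda_n,x_n)$ together with the compact range of $K$ (hence of $K_m = P_mK$) yields a convergent subsequence directly, and the ``streamlined version'' you describe is exactly the argument the authors give. One tiny slip: since the identity involves $P_m K$, not $K$, the limit along the chosen subsequence is $\J P_m w$ rather than $\J w$ (both lie in $H_m$, so the conclusion is unaffected).
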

\begin{proof}
Assume there is a sequence $(t_n, x_n) \in \R \times H_m$ such that
\[f_m(t_n, x_n) \to c\]
\[\nabla f_m(t_n, x_n) = \J(x_n) - K_m(t_n, x_n) \to 0.\]
Then by the compactness of $K_m$, there is a subsequence (again $x_n$) such that $\J K_m(t_n, x_n)$ converges, and thus
\[x_n = \J(\nabla f_m(t_n, x_n)) + \J K_m(t_n, x_n)\]
also converges.
\end{proof}

Also we prove
\begin{lemma}
\label{approx_limit}
If $(z_n) \subset \R^2$ is a convergent sequence such that $z_n \in S_{f_n}$ then $z = \lim z_n \in S_f$.
\end{lemma}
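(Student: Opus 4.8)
The plan is to argue by contradiction: suppose $z = \lim z_n \notin S_f$. Recall that $S_{f_n}$ consists of pairs $(t,y) \in \R^2$ such that $y = f_n(t,x)$ and $\nabla f_n(t,x) = 0$ for some $x \in H_n$; write $z_n = (t_n, y_n)$ with witnesses $x_n \in H_n$, so that $f(t_n, x_n) = y_n$ and $\J(x_n) - K_n(t_n, x_n) = 0$. The first step is to extract a convergent subsequence of $(x_n)$. Exactly as in the proof of Lemma \ref{approx_eps}, the relation $x_n = \J P_n K(t_n, x_n)$ together with the fact that $K$ has range in a compact set and that $P_n \to Id$ uniformly on compact sets forces a subsequence with $x_n \to x^*$ for some $x^* \in H$.

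Next I would pass to the limit in the defining equations. Since $(t_n, y_n) \to (t^*, y^*) := z$ and $x_n \to x^*$, continuity of $f$ gives $f(t^*, x^*) = y^*$; and continuity of $\nabla f$ together with $\nabla f_n(t_n,x_n) = \J(x_n) - P_n K(t_n,x_n)$, using once more $P_n \to Id$ uniformly on the compact set containing the range of $K$, gives $\nabla f(t^*, x^*) = \J(x^*) - K(t^*, x^*) = 0$. Hence $x^*$ is a critical point of $f_{t^*}$ with critical value $y^*$, so $z = (t^*, y^*) \in S_f$, contradicting the assumption. This proves the lemma.

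The only point requiring a little care — and the closest thing to a genuine obstacle — is the uniform convergence step: one must make sure that the sequence $\{K(t_n, x_n)\}$, and similarly $\{K(t^*, x^*)\}$ in the limiting equation, lie in a fixed compact set on which $P_n \to Id$ uniformly, which is precisely guaranteed by the standing hypothesis in Theorem \ref{mainthm3} that the range of $K$ is contained in a compact set. With that in hand, $\|K_n(t_n,x_n) - K(t_n,x_n)\| = \|(Id - P_n) K(t_n,x_n)\| \to 0$, and the passage to the limit in $\nabla f_n$ is justified. No further estimates are needed; the argument is otherwise a routine continuity-and-compactness closure argument entirely parallel to the one already carried out in Lemma \ref{approx_eps}.
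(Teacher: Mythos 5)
Your argument is correct and is essentially the paper's own proof: both extract a convergent subsequence of $x_n = \J P_n K(t_n,x_n)$ using the compactness of the range of $K$ and the uniform convergence $P_n\to Id$ on compact sets, then pass to the limit to obtain $x=\J K(t,x)$ and $(t,f(t,x))=z$. The contradiction framing you add is superfluous but harmless; the substance is identical.
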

\begin{proof}
Take $(t_n, x_n) \in \R \times H_n$ such that $(t_n, f(t_n, x_n)) = z_n$ and $\nabla f_n(t_n, x_n) = 0$.
We have
\[x_n = \J K_n(t_n, x_n)\]
so taking a subsequence of $\J K_n(t_n, x_n)$ we have 
\[x = \lim x_n = \J K(t, x)\]
and $(t, f(t, x)) = z$.
\end{proof}

For sufficiently large $m$, the spectral flow is computed as $\spf(f_m, I) = \mu(f_{m,-1}) - \mu(f_{m,1}) \neq 0$ so considering Lemma \ref{approx_ps}, the function $f_m$ is in the hypotheses of Theorem \ref{mainthm2}.
We obtain for every $m \geq m_0$ a family of closed connected sets $C_m \subseteq S_{f_m}$ satisfying the conclusion of Theorem \ref{mainthm}.

\medskip

Now we can prove the main result of this section.

\begin{potwr}{mainthm3}
As in the proof of Theorem \ref{mainthm}, assume none of the alternatives \eqref{mainthm_unbounded}, \eqref{mainthm_intersects} are satisfied.
Take $G_f \subseteq \R^2, R>0, D_+,D_-$ and $\Sout$ as in the proof o Theorem \ref{mainthm}, and $\varepsilon > 0$ as in the Lemma \ref{approx_eps}.
The set $G_f$ is contained in $D_+$, contains $Z_{in}$ and does not intersect $\Sout$.
Again by Whyburn's Lemma there exists an open set $A$ with $\overline A \in D_+$ containing $G_f$ such that 
$\partial A \cap (E_f \cup \Sout) = \emptyset$.

For every $m > m_0$ we have non empty intersections $z_m \in C_m \cap \partial A$.
By Lemma \ref{approx_eps}, we have $z_m \not\in B_{\pm 1}$.

Taking a convergent subsequence given by the compacity of $\partial A$, we may assume $z_m \to z \in \partial A$.
Finally, by Lemma \ref{approx_limit} we obtain $z \in S_f$.
Since $z \not\in \Zin \cup \Sout$ we deduce
$z \in E_f$ which is a contradiction.
\end{potwr}

\end{document}